\newtheorem{lemma}[equation]{Lemma}
\newtheorem{theorem}[equation]{Theorem}
\newtheorem{remark}[equation]{Remark}
\newcommand{\mcal}{\mathcal}
\numberwithin{equation}{section} \numberwithin{table}{section}
\numberwithin{figure}{section}
\numberwithin{algorithm}{section}
\begin{document}
\begin{frontmatter}
\title{A two-level stochastic collocation method for semilinear elliptic equations with random coefficients}

\author[SJU]{Luoping Chen}
\ead{clpchenluoping@163.com}
\address[SJU]{School of Mathematics, Southwest Jiaotong University, Chengdu 611756, China}

\author[PNNL]{Bin Zheng\corref{cor1}}
\ead{bin.zheng@pnnl.gov}
\address[PNNL]{Advanced Computing, Mathematics \& Data Division, Pacific Northwest National Laboratory, Richland, WA 99352, USA}

\author[Purdue]{Guang Lin}
\ead{guanglin@purdue.edu}
\address[Purdue]{Department of Mathematics, School of Mechanical Engineering, Purdue University, West Lafayette, IN 47907, USA}

\author[WSU]{Nikolaos Voulgarakis}
\ead{nvoul@tricity.wsu.edu}
\address[WSU]{Department of Mathematics, Washington State University, Tri-Cities, Richland, WA, 99354, USA}

\cortext[cor1]{Corresponding author}

\begin{abstract}
In this work, we propose a novel two-level discretization for solving semilinear elliptic equations with random coefficients. Motivated by the two-grid method for deterministic partial differential equations (PDEs) introduced by Xu \cite{xu1994novel}, our two-level stochastic collocation method utilizes a two-grid finite element discretization in the physical space and a two-level collocation method in the random domain. In particular, we solve semilinear equations on a coarse mesh $\mathcal{T}_H$ with a low level stochastic collocation (corresponding to the polynomial space $\mathcal{P}_{\boldsymbol{P}}$) and solve linearized equations on a fine mesh $\mathcal{T}_h$ using high level stochastic collocation (corresponding to the polynomial space $\mathcal{P}_{\boldsymbol{p}}$). We prove that the approximated solution obtained from this method achieves the same order of accuracy as that from solving the original semilinear problem directly by stochastic collocation method with $\mathcal{T}_h$ and $\mathcal{P}_{\boldsymbol{p}}$. The two-level method is computationally more efficient than the standard stochastic collocation method for solving nonlinear problems with random coefficients. Numerical experiments are provided to verify the theoretical results.
\end{abstract}

\begin{keyword}
Semilinear problems \sep random coefficients \sep two-grid \sep finite element\sep stochastic collocation
\end{keyword}

\end{frontmatter}

\section{Introduction}
Stochastic partial differential equations (SPDEs), especially nonlinear SPDEs, provide mathematical models for the quantification of uncertainties in many complex physical and engineering applications. Some examples include the propagation of uncertainties associated with input parameters (such as the coefficients, forcing terms, boundary conditions, geometry of the domain etc.) to certain output quantities of interests, see e.g., flow in heterogeneous porous media \cite{ma2011stochastic}, thermo-fluid processes \cite{knio2001stochastic, le2002stochastic}, flow-structure interactions \cite{xiu2002stochastic}. More applications of the nonlinear SPDEs in physics and mechanics can be found in \cite{adomian1989nonlinear, bellomo1987nonlinear}. 

Numerical methods dealing with SPDEs can be roughly categorized as either intrusive or non-intrusive types. The stochastic Galerkin (SG) method based on the polynomial chaos expansion \cite{xiu2002wiener, xiu2002stochastic, xiu2003modeling, babuska2004galerkin, najm2009uncertainty, matthies2005galerkin, zhen2014} is considered as an intrusive method since it results in coupled systems which cannot be solved directly by the corresponding deterministic solvers. The SG method applies Galerkin projection to discretize the stochastic space and uses standard finite element discretization in the physical space. It is often advantageous over non-intrusive approaches in terms of the computational efficiency when efficient solvers are available. It provides an exponentially convergent approximation when the solution of stochastic problem is smooth with respect to the random variables. However, it is reported that for some nonlinear problems, the stochastic Galerkin method may not be as efficient as non-intrusive stochastic methods \cite{yaonumerical}. The Monte Carlo (MC) method \cite{shinozuka1972monte, caflisch1998monte, matthies2005galerkin, yaonumerical} is the most widely used non-intrusive method based on the sampling techniques. MC method is attractive because its convergence is independent of the stochastic dimension. On the other hand, its rate of convergence is rather slow, proportional to $1/\sqrt{N}$ with $N$ being the number of samples. Another non-intrusive type method, stochastic collocation (SC) method \cite{xiu2005high, hosder2006non, nobile2008sparse, ma2009adaptive, babuvska2010stochastic, yaonumerical}, has recently gained popularity. The stochastic collocation method shares the same exponential convergence property as the stochastic Galerkin method. Moreover, stochastic collocation method only requires solving the deterministic problem on a set of collocation points, hence existing efficient and robust solvers for these problems are applicable. Both the stochastic Galerkin and stochastic collocation method suffer from the curse of dimensionality. To address the problem of the curse of dimensionality, methods based on multiscale finite element method in physical space and sparse grid collocation method in stochastic space have been proposed \cite{Aarnes:2008aa, Ma:2011aa}. In \cite{Zhang:2015aa} a multiscale data-driven stochastic method is proposed to reduce both the stochastic and the physical dimensions of the solution.

The purpose of this study is to improve the efficiency of the stochastic collocation method for solving semilinear SPDEs. Our motivation comes from the two-grid finite element discretization proposed by Xu \cite{xu1994novel, xu1996two} for the nonsymmetric, indefinite and nonlinear elliptic problems. The main idea of two-grid method is based on the observation that a very coarse grid space is sufficient for some nonsymmetric, indefinite and/or nonlinear problems that are dominated by their symmetric, positive and/or linear parts.  Later, the method has been applied to solve semilinear elliptic eigenvalue problems \cite{xu2001two, chien2006two}, nonlinear parabolic differential equations \cite{dawson1994two, dawson1998two, chen2003two, chen2007analysis}, Navier-Stokes equations \cite{layton1995two, layton1998two, ammi1994nonlinear, utnes1997two, girault2001two}, magnetohydrodynamics system \cite{layton1997two}, etc.

In order to generalize the two-grid technique for solving semilinear SPDEs, we shall utilize two meshes in the physical domain and two levels of collocation points in the random domain. Furthermore, to minimize the computational cost, we use fine mesh for spatial discretization when approximating the stochastic variables with high order polynomial space, and use coarse mesh in spatial space with low order polynomial space in stochastic space. More precisely, our method consists of two steps, i.e., we first solve nonlinear problems using the coarse mesh and low level stochastic collocation, then solve a corresponding linearized problems on the fine mesh with high level stochastic collocation. The resulting two-level discretization method is computationally more efficient. Moreover, we prove that the solution obtained from two-level approach has the same order of accuracy as that from solving nonlinear problems directly using fine mesh and high level collocation points. We verify the theoretical results by several numerical examples.

The rest of the paper is organized as follows. In Section \ref{sec:preliminary}, we introduce the model problem and some notations. The two-level method is described in detail in Section \ref{sec:algorithmandconvergence}. In Section {\ref{sec:convergence}}, we estimate the error of the approximated solution. Finally, in Section \ref{sec:numericalresult}, numerical experiments are given to verify the theoretical results.

\section{Model problem and weak formulation}
\label{sec:preliminary}
In this work, we investigate the following semilinear elliptic problem with random coefficient
\begin{equation}\label{model-problem}
\left\{
\begin{aligned}
-\nabla\cdot(a(\omega,x)\nabla u(\omega,x)) + f(\omega,x,u(\omega,x))&=0,\quad \; x\in D,\\
u(\omega,x) &= 0, \quad x\in\partial D,
\end{aligned}
\right.
\end{equation}
where $D\subset\mathbb{R}^d$ is a bounded domain, $\partial D$, the boundary of $D$, is either smooth or convex and piecewise smooth, the diffusion coefficient $a$ is a real-valued random field defined on $D$, i.e., for each $x\in D$, $a(\cdot,x):\Omega\rightarrow \mathbb{R}$ is a random variable with respect to a suitable probability space $(\Omega,\mathscr{F},\mathscr{P})$. Here $\Omega$ is the set of elementary events, $\mathscr{F}$ is the $\sigma$-algebra and $\mathscr{P}: \Omega\rightarrow [0,1]$ is a probability measure. We assume that $a$ is bounded and uniformly coercive, i.e., there exist $a_{min}, a_{max}\in (0,\infty)$, such that
\begin{align}\label{coercevity}
\mathscr{P}(\omega\in\Omega: a(\omega,x)\in [a_{min}, a_{max}],\forall x\in\bar{D}) = 1.
\end{align}
Here, we also assume $f(\omega,x,u(\omega,x))$ is sufficiently smooth. For brevity, we shall drop the dependence of variables $\omega, x$ in $f(\omega, x, u)$ in the following exposition. We also note that here and later in this paper the gradient operator, $\nabla$, always represents differentiation with respect to $x$ only. The model problem (\ref{model-problem}) is a prototype stationary reaction-diffusion problem that can be found in many chemical and biological applications. For example, it appears in the semi-discretization in time of the nonlinear stochastic reaction-diffusion problem modeling the conversion of starch into sugars in growing apples \cite{Rosseel:2012aa}.

We introduce some notations which will be used later. Let $(\cdot,\cdot)$ be the inner product of $\mathcal L^2(D)$. $\mathcal W^{m,q}(D)$ denotes the standard Sobolev space with norm $\|\cdot\|_{m,q}$ given by ${\textstyle{\|v\|^q_{m,q} = \sum_{|\alpha|\leq m}\|\frac{\partial^{\alpha}v}{\partial x^{\alpha}}\|_{\mathcal L^q}^q}}$, when $q=2$, we denote $\mathcal H^m(D)= \mathcal W^{m,2}(D)$. Let $\mathcal H_0^1(D)$ be the subspace of $\mathcal H^1(D)$ consisting of all the functions with vanishing trace on $\partial D$. $\|\cdot\|_m = \|\cdot\|_{m,2}$ and $\|\cdot\| = \|\cdot\|_{0,2}$. We need the following well-known Sobolev inequalities in Section {\ref{sec:convergence}} 
\begin{align}\label{sobolevinequality}
\|u\|_{0,q}\lesssim\|u\|_1\;  (d=2\ {\rm and}\ 1\leq q<\infty)\;\;{\rm and}\;\;\|u\|_{0,6}\lesssim \|u\|_1 \;(d=3),
\end{align}
where the notation ``$\lesssim$" is equivalent to ``$\leq C$" for some positive constant $C$.
We assume that problem (\ref{model-problem}) has at least one solution $u(\omega,x)\in \mathcal H_0^1(D)\cap \mathcal H^2(D)$ for each parameter $\omega\in\Omega$.

To introduce the stochastic discretization, we first approximate the input random field $a(\omega,x)$ by a truncated Karhunen-Lo\`eve (KL) expansion
\begin{align*}
a(\omega,x)\approx a_N(\omega,x) &=a_N(Y_1(\omega),Y_2(\omega),\cdots,Y_N(\omega),x)\\
&= \bar a(x)+\sum_{n=1}^N\sqrt{\lambda_n}b_n(x)Y_n(\omega),
\end{align*}
where $\bar{a}(x)$ is the mean value of $a(\omega,x)$, $(Y_{1},Y_2,\cdots,Y_N)$ are uncorrelated and identically distributed random variables with zero mean and unit variance. For simplicity, we assume that $(Y_{1},Y_2,\cdots,Y_N)$ are independent and $\{\rho_n\}_{n=1}^N$ are the probability density functions of the random variables $\{Y_n\}_{n=1}^N$. $\lambda_1\geq\lambda_2\geq\cdots\geq\lambda_i\geq\cdots\geq 0$ and $\{b_n(x)\}_{n=1}^N\subset \mathcal L^2(D)$ are the eigenvalues and eigenfunctions of the symmetric positive semidefinite Fredholm operator $C_a: \mathcal L^2(D)\rightarrow \mathcal L^2(D)$ defined  by
\begin{align*}
(C_ag)(x) = \int_{D}Cov_a(x,x')g(x')dx',
\end{align*}
with $Cov_a$ being a given continuous covariance function. The truncated KL expansion is optimal in the sense that it obtains the smallest mean square error among all approximations of $a$ in $N$ uncorrelated random variables \cite{ghanem1991stochastic}.

Let $\Gamma_n=Y_n(\Omega)$ be the image of $Y_n$, $\Gamma = \Pi_{n=1}^N\Gamma_n$. The random variables $[Y_1,Y_2,\cdots, Y_N]$ have a joint probability density function $\rho=\Pi_{n=1}^N\rho_n$. By Doob-Dynkin's Lemma {\cite{oksendal2003stochastic}}, the solution $u(\omega,x)$ can be represented by $u$($Y_1(\omega)$,$Y_2(\omega)$,$\cdots$,$Y_N(\omega)$,$x$).  Let 
$(\Gamma,\mathcal{B}^N,\rho dy)$ be a probability space with $\mathcal{B}^N$ being the $\sigma$-algebra associated with the set of outcomes $\Gamma$. The expectation of a random variable $\textstyle{\mu(y)\in (\Gamma,\mathcal{B}^N,\rho dy)}$ is ${\textstyle{E(\mu(y)) = \int_{\Gamma}\mu(y)\rho(y)dy}}$ and variance is $Var(\mu(y))$ $=$ ${\textstyle{\int_{\Gamma}\mu^2(y)\rho(y)d{y}}-}$ ${\textstyle{\left[\int_{\Omega}\mu(y)\rho(y)d{y}\right]^2}}$. Thus, after replacing the diffusion coefficient $a$ by the truncated KL expansion $a_N$, (\ref{model-problem}) can be written as the following parametrized problem with $N$-dimensional parameter
\begin{equation}\label{KLmodel-problem}
\left\{
\begin{aligned}
-\nabla\cdot(a_N(y,x)\nabla u(y,x))+f(u(y,x)) &=0,\quad x\in D,\\
u(y,x)&=0,\quad x\in\partial D.
\end{aligned}
\right.
\end{equation}
For a.e. $y\in\Gamma$ (here and in what follows, a.e. stands for `almost everywhere'), we assume that $f$ is sufficiently smooth, the problem (\ref{KLmodel-problem}) has at least one solution $u(y,\cdot)\in \mathcal H^1_0(D)\cap \mathcal H^2(D)$, and the linearized operator $L_v:=-\nabla\cdot(a_N\nabla) + f'(v)$ is nonsingular for $v\in H^1_0(D)\cap L^\infty(D)$. As a result of this assumption, for a.e. $y\in\Gamma$, $L_v: \mathcal H^2(D)\cap \mathcal H_0^1(D)\mapsto \mathcal L^2(D)$ is a bijection and satisfies
\begin{equation*}
\|w\|_2\leq C\|L_vw\|, \quad \forall w\in \mathcal H^2(D)\cap \mathcal H_0^1(D).
\end{equation*}

The use of truncated Karhunen-Lo\`eve expansion introduces a modeling error when comparing with the original problem (\ref{model-problem}). In the following, we focus on the study of the model problem (\ref{KLmodel-problem}) and neglecting this truncation error.  

We denote $\mathcal L^2_{\rho}(\Gamma)$ the Hilbert space with inner product 
$$
(f,g)_{\mathcal L^2_{\rho}} = \int_\Gamma f(y)g(y)\rho(y)dy, \quad \forall f,g\in \mathcal L^2_{\rho}(\Gamma),
$$
and introduce the following tensor product spaces
 \begin{align*}
 \mathcal L^2_{\rho}(\Gamma)\otimes V &=  \{u\,|\,u(y,\cdot)\in V,\  {\rm a.e.}\ {\rm in}\  \Gamma, \ {\rm and}\ u(\cdot,x)\in \mathcal L_{\rho}^2(\Gamma), \ {\rm a.e.}\ {\rm in}\ D\},
 \end{align*}
with $V$ a Hilbert space and the inner products defined by
\begin{align*}
(u,v)_{\mathcal L^2_{\rho}(\Gamma)\otimes V} &= E[(u, v)_V].
\end{align*}
We assume that problem (\ref{KLmodel-problem}) has at least one solution $u\in \mathcal L_{\rho}^2(\Gamma)\otimes (\mathcal H^1_0(D)\cap \mathcal H^2(D))$.

We also denote $\mathcal{L}^q_\rho(\Gamma)$ a Banach space with norm
$$
\|f\|_{\mathcal{L}^q_\rho(\Gamma)}
=\left(\int_\Gamma|f(y)|^q\rho(y)\mathrm{d}y\right)^{1/q},
$$
and define the following tensor product spaces
 \begin{align*}
 \mathcal L^q_{\rho}(\Gamma)\otimes V = \{u(y,x):\Gamma\times D\rightarrow\mathbb{R}|u(y,\cdot)\in V, \  {\rm a.e.}\ {\rm in}\  \Gamma, \ {\rm and}\ u(\cdot,x)\in \mathcal L_{\rho}^q(\Gamma), \ {\rm a.e.}\ {\rm in}\ D\},
 \end{align*}
with $V$ a Banach space and the tensor norm defined by
  \begin{align*}
\|u\|^q_{\mathcal L^q_{\rho}(\Gamma)\otimes V} = E(\|u\|^q_{V}).
 \end{align*}

For convenience, we denote $\mathcal V_{\rho} := \mathcal L^2_{\rho}(\Gamma)\otimes \mathcal H^1_0(D)$ and use notation $u(y)$ in the following whenever we want to highlight the dependence on the parameter $y$.

The weak formulation of problem (\ref{KLmodel-problem}) is to find $u\in\mathcal V_{\rho}$ such that
\begin{equation}\label{KLweakform}
\int_Da_N(y)\nabla u(y)\nabla wdx +\int_Df(u(y))wdx = 0,\ \forall w\in \mathcal H^1_0(D), \rho-{\rm a.e.\ in}\  \Gamma.
\end{equation}
and the weak formulation for the linearized problem of (\ref{KLmodel-problem}) can be represented as: for some $v\in\mathcal{L}^2_{\rho}(\Gamma)\otimes \mathcal{W}^{1,p}(D)$, find $\bar{u}\in\mathcal V_{\rho}$ such that
\begin{equation}\label{LKLweakform}
\int_Da_N\nabla \bar u\nabla wdx +\int_Df'(v)\bar uwdx = \int_D(-f(v)+f'(v)v)wdx,\ \forall w\in \mathcal H^1_0(D), \rho-{\rm a.e.\ in}\  \Gamma.
\end{equation}

Following \cite{babuvska2010stochastic}, we make an assumption that the coefficient $a_N$ and $f(u)$ admit a smooth extension on the $\rho$-zero measure sets. Then, equation (\ref{KLweakform}), (\ref{LKLweakform}) can be extended $a.e.$ in $\Gamma$ with respect to the Lebesgue measure.

\begin{remark}
Our two-level stochastic collocation method consists of solving the problem (\ref{KLweakform}) on a coarse mesh with low-level stochastic collocation, and solving the linearized problem (\ref{LKLweakform}) on a fine mesh with high-level stochastic collocation.
\end{remark}

\section{Two-level discretization for semilinear SPDEs}
\label{sec:algorithmandconvergence}
In this section, we first describe the stochastic collocation method following \cite{babuvska2010stochastic}. Then, we present the two-level stochastic collocation method for solving semilinear PDEs with random coefficients.

\subsection{Stochastic collocation method}
We first introduce the finite dimensional subspace $\mathcal V_{\boldsymbol{p},h}\subset \mathcal V_{\rho}$ given by $\mathcal{P}_{\boldsymbol{p}}(\Gamma)\otimes \mathcal X_h(D)$, where

\begin{itemize}
\item $\mcal{P}_{\boldsymbol{p}}(\Gamma) = \otimes_{n=1}^N\mcal{P}_{p_n}(\Gamma_n)$ is the span of the tensor product polynomials with degree at most $\boldsymbol{p} = (p_1,p_2,\cdots,p_N)$, and
$$
\mcal{P}_{p_n}(\Gamma_n) = {\rm{span}}\{y_n^m, m = 0,1,\cdots,p_n\},\; n = 1,2,\cdots, N.
$$

Therefore, the dimension of $\mcal{P}_{\boldsymbol{p}}$ is $N_{\boldsymbol{p}} = \Pi_{n=1}^{N}(p_n+1)$.
\item $\mathcal X_h(D) = \ {\rm span}\ \{\phi_1,\phi_2,\cdots, \phi_{N_{h}}\}$ is a finite element space of dimension $N_{h}$, where $\phi_1,\phi_2,\cdots, \phi_{N_h}$ are piecewise polynomials defined on a quasi-uniform triangulation $\mcal{T}_h$ with mesh size $h$.
\end{itemize}

We first introduce a semi-discrete approximation $u_{h}:\;\Gamma\rightarrow \mathcal X_h(D)$, i.e., for a.e. $y\in\Gamma$, $\forall w\in \mathcal X_h(D)$,
\begin{equation}\label{weakformula}
\int_Da_N(y)\nabla u_h(y)\cdot\nabla w dx + \int_Df(u_h(y))w dx = 0.
\end{equation}
Similarly, for a.e. $y\in\Gamma$, we also introduce the semi-discrete approximate $u^h$ of the linearized equation (\ref{LKLweakform}) satisfying
\begin{equation}\label{Lweakform}
\int_Da_N\nabla u^h\nabla wdx +\int_Df'(v)u^hwdx = \int_D(-f(v)+f'(v)v)wdx, \quad\forall w\in \mathcal{X}_h(D).
\end{equation}

Next, we collocate equation (\ref{weakformula}) on the roots of orthogonal polynomials with respect to the weight $\rho$ and build the fully discrete solution $u_{h,\boldsymbol{p}}\in\mathcal{P}_{\boldsymbol{p}}(\Gamma)\otimes \mathcal X_h(D)$ by interpolating in $y$ with the collocated solutions, i.e.
$$
u_{h,\boldsymbol{p}}(y,x) = \sum_{k=1}^{N_{\boldsymbol{p}}}u_h(\hat y_k,x)\psi_k(y),
$$
where $u_h(\hat y_k,\cdot)$ is the solution of (\ref{weakformula}) at the collocation point $\hat y_k = (y_{1,k_1},y_{2,k_2},\cdots, y_{N,k_N})$ and $\{\psi_k(y)\}_{k=1}^{N_{\boldsymbol{p}}}$ are the Lagrange basis with respect to the collocation points $\{\hat y_{k}\}_{k=1}^{N_{\boldsymbol{p}}}$. Using the Lagrange interpolation operator $\mathcal{I}_{\boldsymbol{p}}:$ $\mathcal C^{0}(\Gamma; \mathcal H_0^1(D))$$\rightarrow$$\mcal{P}_{\boldsymbol{p}}(\Gamma)\otimes \mathcal H_0^1(D)$, defined by 
$$
(\mcal{I}_{\boldsymbol{p}} v)(y) = \sum_{k=1}^{N_{\boldsymbol{p}}}v(\hat y_k)\psi_k(y),\quad \forall \ v\in \mathcal C^0(\Gamma;\mathcal H_0^1(D)),
$$
we have $u_{h,\boldsymbol{p}}= \mcal{I}_{\boldsymbol{p}}u_h$.

\subsection{Two-level stochastic collocation method}
In this subsection, we shall present a two-level discretization scheme for semilinear elliptic equations with random coefficients based on two tensor product spaces $\mcal{P}_{\boldsymbol{P}}(\Gamma)\otimes \mathcal X_H(D)$ and $\mcal{P}_{\boldsymbol{p}}(\Gamma)\otimes \mathcal X_h(D)$. The idea of the two-level method is to reduce a nonlinear SPDE problem into a linear SPDE problem by solving a nonlinear SPDE problem on a much smaller space. The method is described in detail as follows. \\

\begin{mdframed}[frametitle={Two-level discretization}]
\begin{itemize}
\renewcommand{\labelitemi}{$\bullet$}
\item \textbf{Step 1:} on the coarse mesh $\mathcal{T}_H$, we solve the semilinear equation on a small number of collocation points. More precisely, for $k=1, 2, \cdots, N_{\boldsymbol{P}}$, find $u_H(\hat y^c_k,\cdot)$ on the coarse mesh such that

\begin{align}\label{collocation-Galerkinweakform}
(a_N(\hat y^c_k,\cdot)\nabla u_H(\hat y^c_k,\cdot),\nabla w) + (f(u_H(\hat y^c_k,\cdot)),w) = 0,\quad \forall w\in \mathcal X_H(D),
\end{align}
where $\{\hat y_k^c\}_{k=1}^{N_{\boldsymbol{P}}}$ is the set of collocation points corresponding to polynomial space $\mathcal{P}_{\boldsymbol{P}}(\Gamma)$.

The approximated solution of (\ref{KLweakform}) in $\mathcal{P}_{\boldsymbol{P}}(\Gamma)\otimes\mathcal{X}_H(D)$ is given by
\begin{equation*}
\hspace{1.5cm}u_{H,\boldsymbol{P}}(y,x) = (\mcal{I}_{\boldsymbol{P}} u_H)(y) = \sum_{k=1}^{N_{\boldsymbol{P}}}u_H(\hat y^c_k,x)\psi^{\boldsymbol{P}}_k(y),
\end{equation*}
where $\{\psi^{\boldsymbol{P}}_k\}_{k=1}^{N_{\boldsymbol{P}}}$ are Lagrange basis functions of $\mathcal{P}_{\boldsymbol{P}}(\Gamma)$.

\vspace{6pt}
\item \textbf{Step 2:} We solve the following linearized problem on a larger set of collocation points. Namely, find $u^{h}(\hat y_k,\cdot)$ on the fine mesh $\mathcal{T}_h$ such that
\begin{equation}\label{collocation-Galerkin-finemesh}
\begin{aligned}
&(a_N(\hat y_k,\cdot)\nabla u^{h}(\hat y_k,\cdot),\nabla w)  + (f'(u_{H,\boldsymbol{P}}(\hat y_k,\cdot))u^{h}(\hat y_k,\cdot),w) \\
 &= (-f(u_{H,\boldsymbol{P}}(\hat y_k,\cdot))+f'(u_{H,\boldsymbol{P}}(\hat y_k,\cdot))u_{H,\boldsymbol{P}}(\hat y_k,\cdot),w), \ \forall w\in \mathcal X_h(D),
\end{aligned}
\end{equation}
where  $\{\hat y_k\}_{k=1}^{N_{\boldsymbol{p}}}$ is the set of collocation points corresponding to $\mathcal{P}_{\boldsymbol{p}}(\Gamma)$.

Finally, the two-level solution $u^{h,\boldsymbol{p}}$ is given by
$$
\hspace{1.5cm}u^{h,\boldsymbol{p}}=(\mcal{I}_{\boldsymbol{p}} u^h)(y) = \sum_{k=1}^{N_{\boldsymbol{p}}}u^h(\hat y_k,x)\psi^{\boldsymbol{p}}_k(y),
$$
where $\{\psi^{\boldsymbol{p}}_k\}_{k=1}^{N_{\boldsymbol{p}}}$ are the Lagrange basis functions of $\mathcal{P}_{\boldsymbol{p}}(\Gamma)$.
\end{itemize}
\end{mdframed}
\vspace{4pt}

We use Newton's method for the semilinear system (\ref{collocation-Galerkinweakform}), i.e., for each collocation point, starting from an initial guess $u_H^0$, and for $l=0,1,\cdots$, we solve
\begin{align}\label{newtoniteration}
(a_N\nabla u_H^{{l}+1},\nabla w) + (f(u_H^{{l}})+f'(u_H^{{l}})(u_H^{{l}+1}-u_H^{l}),w) = 0,\;\forall w\in \mathcal{X}_H(D).
\end{align}
Let $\{\phi_j^H\}_{j=1}^{N_H}$ be the finite element basis functions on triangulation $\mathcal{T}_H$, $A, J_{l}$ be the matrices whose entries are given by
\begin{align*}
A_{ij} = (a_N\nabla\phi^H_j,\nabla\phi^H_i),\quad (J_{l})_{ij}  = (f'(u_H^{{l}})\phi^H_j,\phi^H_i),
\end{align*}
and ${F}_{l}$ be the right hand side vector with $({F}_{l})_i = (-f(u_H^{l})+f'(u^{l}_H)u_H^{l},\phi^H_i)$. Then, for each collocation point, Newton iteration (\ref{newtoniteration}) can be written as
\begin{align}\label{algebraicNewton}
 U_H^{{l}+1} =  U_H^{l} +(A+J_{l})^{-1}({F}_{l}-(A+J_{l}) U_H^{l}),
\end{align}
where $\textstyle{u^{{l}+1}_H = \sum_{j=1}^{N_H}( U^{{l}+1}_H)_j\phi^H_j}$. In practice, numerical quadrature with sufficient accuracy is needed to compute $(f'(u_{H,\boldsymbol{P}})u^h,w)$.      

The advantage of the two-level discretization is that we only need to solve a small number of semilinear equations on the coarse mesh in addition to solving linearized problems on the fine mesh. In fact, we solve $N_{\boldsymbol{P}} (=\Pi_{n=1}^N(P_n+1))$ seminlinear equations on the coarse mesh and $N_{\boldsymbol{p}} (=\Pi_{n=1}^N(p_n+1))$ linerized equations on the fine mesh. From the analysis given in Section \ref{sec:convergence}, when choosing $P_n=p_n/2 \ (n=1,2,\cdots, N)$ and $H=h^{1/4}$ in the two-level discretization, the resulting approximated solution has the same order of accuracy as that obtained from the standard stochastic collocation method on mesh $\mathcal{T}_h$ and tensor-product polynomial space $\mathcal{P}_{\boldsymbol{p}}(\Gamma)$. For the standard stochastic collocation method, we need to solve $N_{\boldsymbol{p}}$ semilinear equations. Roughly speaking, this corresponds to solving $kN_{\boldsymbol{p}}$ linear equations on the fine mesh if we assume the number of Newton iteration is $k$ for each collocation point. Hence, the two-level method saves $(k-1)N_{\boldsymbol{p}}$ linear solves on the fine mesh at the expense of $kN_{\boldsymbol{P}}$ linear solvers on the coarse mesh. When $h>>H$ and $N$ is big, the computational savings is enormous. For example, if the fine mesh size $h=2^{-12}$ which gives $\text{dim}\mathcal{X}_h\approx 1.7\times 10^7$, choosing $H=h^{1/4}$ gives $\text{dim}\mathcal{X}_H\approx 49$; if the number of random variables $N=8$, the polynomial degree of each random dimension in $\mathcal{P}_{\boldsymbol{p}}$ is $p_n=4$ (for $n=1,2,\dots, N$) which gives $N_{\boldsymbol{p}}\approx 3.9\times 10^5$, choosing $P_n=p_n/2$ gives $N_{\boldsymbol{P}}\approx 6.5\times 10^3$, so the two-level method saves the solving of approximately $10^5$ linear system of equations with dimension $10^7$.  

\begin{remark}
The two-level stochastic collocation method can be parallelized by solving each realization of the coarse-grid problem independently, followed by a barrier due to the interpolation of the coarse-level solution, and then solving each realization of the fine-grid problem independently. Because the overall computational cost is dominated by solving fine-grid problems, this barrier does not have significant impact on the parallelizability of this method.
\end{remark}

\begin{remark}
In this work, we assume that the diffusion coefficient $a$ in the model problem (\ref{model-problem}) is a random field. In general, the two-level stochastic collocation method described above is applicable to model problems with uncertain boundary conditions or uncertain source terms. For such cases, the stochastic solutions can also be described by tensor product polynomials when using stochastic collocation method to propagate uncertainty from boundary conditions or source terms to solutions \cite{Le-Maitre2010aa}. The tensor product structure of the solution space allows us to apply the two-level stochastic collocation method to model problems with other sources of uncertainty.
\end{remark}

\begin{remark}
It is known that the stochastic collocation method using full-tensor product polynomials suffers from the curse-of-dimensionality. For problems with high stochastic dimensions, we may consider the Smolyak sparse grid collocation method which has the same asymptotic accuracy as full-tensor product collocation method \cite{Nobile:2008aa}. We expect that the two-level method using Smolyak sparse grid will also be efficient for nonlinear problems with high random dimensions. Future research is needed along this direction.
\end{remark}

\section{Convergence analysis}
\label{sec:convergence}
\label{subsec:converge}
We shall now derive some error estimates for the two-level discretization introduced in Section {\ref{sec:algorithmandconvergence}}. To simplify the analysis, we do not include the effect of numerical quadrature. We refer to \cite{Feistauer:1987aa,Abdulle:2012aa} for the a priori error estimates of finite element methods with numerical quadrature for nonlinear elliptic problems. 

We first give the following error estimate for the semi-discrete solution by finite element methods. 
\begin{lemma}\label{error4fem}
Let $u_h: \Gamma\rightarrow \mathcal X_h(D)$ be the semi-discrete finite element solution satisfying (\ref{weakformula}). 

Then, for $2\leq p<\infty,\  2\leq q<\infty,$
\begin{align*}
\|u-u_h\|_{\mathcal L^q_{\rho}(\Gamma)\otimes \mathcal L^p(D)}+h\|u-u_h\|_{\mathcal L^q_{\rho}(\Gamma)\otimes \mathcal W^{1,p}(D)}&\lesssim h^2\|u\|_{\mathcal L^q_{\rho}(\Gamma)\otimes \mathcal W^{2,p}(D)}
\end{align*}
and
\begin{align*}
\|u-u_h\|_{\mathcal L_{\rho}^2(\Gamma)\otimes \mathcal L^{\infty}(D)}&\lesssim h^2|\log h|(\|u\|_{\mathcal L^2_{\rho}(\Gamma)\otimes \mathcal W^{2,\infty}(D)}),\\
 \|u-u_h\|_{\mathcal L^2_{\rho}(\Gamma)\otimes \mathcal W^{1,\infty}(D)}&\lesssim h(\|u\|_{\mathcal L^2_{\rho}(\Gamma)\otimes \mathcal W^{2,\infty}(D)}).
\end{align*}
\end{lemma}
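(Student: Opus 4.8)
The plan is to reduce the problem to a family of deterministic finite element error estimates parametrized by $y\in\Gamma$, and then integrate in the random variable. The key observation is that for a.e. fixed $y\in\Gamma$, equation (\ref{weakformula}) is exactly the standard finite element discretization of the deterministic semilinear elliptic boundary value problem (\ref{KLmodel-problem}), with $u_h(y)$ approximating $u(y)$. Each target estimate has the form $\|u-u_h\|^2_{\mathcal L^2_\rho(\Gamma)\otimes X(D)} = E(\|u(y)-u_h(y)\|^2_{X(D)})$ for a suitable spatial space $X(D)$, so it suffices to prove the corresponding pointwise-in-$y$ bound with a constant independent of $y$, and then square and integrate against $\rho$.

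First I would fix $y\in\Gamma$ and establish the deterministic estimates. The essential structural facts are the uniform coercivity (\ref{coercevity}), which makes the bilinear form $(a_N(y)\nabla\cdot,\nabla\cdot)$ uniformly $\mathcal H^1_0(D)$-elliptic in $y$, and the assumed uniform nonsingularity of the linearized operator $L_u = -\nabla\cdot(a_N\nabla)+f'(u)$, which yields the a priori bound $\|w\|_2\le C\|L_u w\|$ with $C$ independent of $y$. Under these hypotheses the discrete solution $u_h(y)$ exists and is locally unique in a neighbourhood of $u(y)$, and one has the classical finite element error estimates for semilinear elliptic problems: the $\mathcal W^{1,p}$ and $\mathcal L^p$ bounds follow from quasi-optimality together with the approximation properties of $\mathcal X_h(D)$ and an Aubin--Nitsche duality argument, while the maximum-norm estimate, which carries the logarithmic factor $|\log h|$ in two spatial dimensions, follows from weighted-norm/max-norm finite element theory. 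Throughout I would carefully track the constants and verify, using (\ref{coercevity}) and the uniform invertibility of $L_u$, that they are independent of $y$.

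Then I would integrate in $y$. Squaring each pointwise estimate, multiplying by $\rho(y)$ and integrating over $\Gamma$, and using the definition $\|v\|^2_{\mathcal L^2_\rho(\Gamma)\otimes X(D)} = E(\|v\|^2_{X(D)})$ together with the uniformity of the constants, converts each deterministic estimate into its stochastic counterpart; for instance $E(\|u(y)-u_h(y)\|^2_{\mathcal L^p(D)})\lesssim h^4\, E(\|u(y)\|^2_{\mathcal W^{2,p}(D)})$, which is precisely the first claimed bound after taking square roots and recombining the $\mathcal L^p$ and $\mathcal W^{1,p}$ terms via the triangle inequality. The $\mathcal W^{1,p}$, $\mathcal L^\infty$ and $\mathcal W^{1,\infty}$ estimates are obtained identically.

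I expect the main obstacle to be the nonlinear deterministic estimate, rather than the stochastic integration, which is routine. Specifically, handling the nonlinearity $f(u)$ requires controlling the difference $f(u)-f(u_h)$ in the error equation and invoking the nonsingular linearized operator to close the argument, through a linearization/fixed-point step in which one verifies that the discrete problem admits a solution near $u(y)$ and that the resulting constants remain uniform in $y$ under the stated smoothness and coercivity assumptions. The logarithmic factor in the $\mathcal L^\infty$ estimate also requires the more delicate max-norm finite element machinery, but this becomes standard once the linearized problem is under control.
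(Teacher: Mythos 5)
Your proposal is correct and follows essentially the same route as the paper, whose proof is a one-line citation of the corresponding deterministic finite element estimates in \cite{xu1994novel}: fix $y$, apply the deterministic semilinear error bounds, and integrate in the random parameter. You merely make explicit what the paper leaves implicit, namely the uniformity in $y$ of the constants (via (\ref{coercevity}) and the uniform nonsingularity of $L_u$) and the squaring-and-integration step that converts the pointwise estimates into the tensor-norm statements.
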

\begin{proof}
It follows directly from the result of the corresponding deterministic problem \cite{xu1994novel}.
\end{proof}

For the linearized operator $L_{v(y)} = -\nabla\cdot(a_N(y)\nabla) +f'(v(y))$,  we have the following property.

\begin{lemma}\label{quasi-linear-operator} There exists a constant $\delta >0$ such that for any given $v\in \mathcal L^2_{\rho}(\Gamma)\otimes(\mathcal H^1_0(D)\cap \mathcal L^{\infty}(D))$ with $\|u-v\|_{\mathcal L^2_{\rho}(\Gamma)\otimes \mathcal L^{\infty}(D)}\leq\delta$, and for a.e. $y\in\Gamma$ given,
\begin{itemize}
\item $L_{v(y)}:  \mathcal H^2(D)\cap \mathcal H_0^1(D)\mapsto \mathcal H^2(D)\cap \mathcal H^1_0(D)$ is bijective and there exists a constant $C = C(\delta)$, such that
$$
\|w\|_{\mathcal H^{2}(D)}\leq C(\delta)\|L_{v(y)}w\|_{\mathcal L^2(D)},\quad \forall \ w \in \mathcal H_0^1(D)\cap \mathcal H^2(D).
$$
\item If $h$ is sufficiently small, there exists a constant $c(\delta)$ such that
$$
\sup_{\chi\in \mathcal X_h(D)}\frac{A_{v(y)}(w_h,\chi)}{\|\chi\|_1}\geq c(\delta)\|w_h\|_{1},
$$
where $A_{v(y)}(w_h,\chi) = (a_N(y)\nabla w_h,\nabla\chi)+ (f'(v(y))w_h,\chi)$, and $w_h\in \mathcal X_h(D)$.
\end{itemize} 
\end{lemma}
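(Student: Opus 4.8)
The plan is to regard both statements as perturbations of the well-posedness of $L_u$ assumed in Section~\ref{sec:preliminary}, exploiting that $L_{v(y)}$ differs from $L_u$ only through the zeroth-order multiplier $f'(v)-f'(u)$, which is small in $\mathcal L^\infty(D)$ once $\delta$ is small. Since $u\in\mathcal H^2(D)$ and $d=2$, the embedding $\mathcal H^2(D)\hookrightarrow\mathcal L^\infty(D)$ together with $v\in\mathcal L^\infty(D)$ keeps $u(y),v(y)$ in a fixed bounded set, so the smoothness of $f$ (bounded $f''$ on that set) and the mean value theorem yield the pointwise estimate $|f'(v(y))-f'(u(y))|\lesssim|u(y)-v(y)|$, whence $\|(f'(v(y))-f'(u(y)))w\|\lesssim\delta\|w\|\le\delta\|w\|_2$ for a.e. $y\in\Gamma$. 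I read the target space of $L_{v(y)}$ in the first bullet as $\mathcal L^2(D)$, consistent both with the norm appearing in the stated inequality and with the hypothesis on $L_u$.

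For the first bullet I would argue by a Neumann series. Writing $L_{v(y)}=L_u+(f'(v(y))-f'(u(y)))$ and using the assumed bound $\|w\|_2\le C\|L_uw\|$, the composed operator $L_u^{-1}(L_{v(y)}-L_u)$ on $\mathcal H^2(D)\cap\mathcal H^1_0(D)$ has operator norm $\lesssim\delta$; choosing $\delta$ small enough that this norm is $<1$ makes $I+L_u^{-1}(L_{v(y)}-L_u)$ invertible, hence $L_{v(y)}=L_u\bigl(I+L_u^{-1}(L_{v(y)}-L_u)\bigr)$ is bijective. The reverse triangle inequality then gives $\|L_{v(y)}w\|\ge\|L_uw\|-\|(f'(v(y))-f'(u(y)))w\|\ge(1/C-C_0\delta)\|w\|_2$, which is the claimed bound with $C(\delta)=(1/C-C_0\delta)^{-1}$.

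The discrete inf--sup estimate in the second bullet is the main obstacle, since $A_{v(y)}$ is in general indefinite (the term $(f'(v)w_h,\chi)$ need not be sign-definite), so coercivity alone does not close the argument. Here I would use the Schatz duality technique. First record the G\aa rding inequality $A_{v(y)}(w_h,w_h)\ge a_{min}\|\nabla w_h\|^2-K\|w_h\|^2$ with $K=\|f'(v)\|_{\mathcal L^\infty(D)}$, so by Poincar\'e $A_{v(y)}(w_h,w_h)\ge\alpha\|w_h\|_1^2-K\|w_h\|^2$. The defect $\|w_h\|^2$ is then controlled by duality: because $A_{v(y)}$ is symmetric, $L_{v(y)}$ is self-adjoint, so the solution of the dual problem $L_{v(y)}\phi=w_h$ satisfies $\|\phi\|_2\le C(\delta)\|w_h\|$ directly by the first bullet. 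Integration by parts and symmetry give $\|w_h\|^2=A_{v(y)}(w_h,\phi)=A_{v(y)}(w_h,\phi_h)+A_{v(y)}(w_h,\phi-\phi_h)$, where $\phi_h\in\mathcal X_h(D)$ is an interpolant of $\phi$; using the interpolation estimate $\|\phi-\phi_h\|_1\lesssim h\|\phi\|_2$ and the continuity of $A_{v(y)}$, this yields $\|w_h\|\lesssim S_h+h\|w_h\|_1$, where $S_h:=\sup_{\chi\in\mathcal X_h(D)}A_{v(y)}(w_h,\chi)/\|\chi\|_1$.

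Finally I would combine the pieces. Substituting the squared bound $\|w_h\|^2\lesssim S_h^2+h^2\|w_h\|_1^2$ into the G\aa rding inequality gives $A_{v(y)}(w_h,w_h)\ge(\alpha-Ch^2)\|w_h\|_1^2-CS_h^2$, and for $h$ sufficiently small $\alpha-Ch^2\ge\alpha/2$. Using the trivial bound $A_{v(y)}(w_h,w_h)\le S_h\|w_h\|_1$ together with a Young inequality to absorb the cross term, I obtain $\|w_h\|_1^2\lesssim S_h^2$, i.e. $S_h\ge c(\delta)\|w_h\|_1$, which is exactly the asserted discrete inf--sup condition. All constants depend on $\delta$ only through $K$ and the regularity constant $C(\delta)$ of the first bullet, so $c(\delta)$ is well defined, and the smallness threshold on $h$ is the one announced in the statement.
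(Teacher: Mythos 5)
Your proof is correct in substance, but note that the paper itself offers no proof of this lemma: it is stated as a known property, carried over from the deterministic two-grid framework of Xu \cite{xu1994novel}, where the continuous bound comes from elliptic perturbation theory and the discrete inf--sup condition from Schatz's duality argument. Your reconstruction is exactly that standard route: the Neumann-series factorization $L_{v(y)}=L_u\bigl(I+L_u^{-1}(L_{v(y)}-L_u)\bigr)$ combined with the assumed bound $\|w\|_2\leq C\|L_uw\|$ gives bijectivity and the $\mathcal H^2$ estimate (your reading of the codomain as $\mathcal L^2(D)$ is the intended one; the ``$\mathcal H^2(D)\cap\mathcal H^1_0(D)$'' target in the statement is a typo), and the Schatz argument --- G\aa rding inequality, dual problem $L_{v(y)}\phi=w_h$ with $\|\phi\|_2\lesssim\|w_h\|$ via the first bullet, interpolation of $\phi$, and absorption of the $h$-terms for $h$ small --- delivers the discrete inf--sup bound with the claimed dependence of the constants. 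So you have supplied, correctly, the argument the paper leaves implicit.

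One caveat deserves mention. Your pointwise-in-$y$ estimate $\|(f'(v(y))-f'(u(y)))w\|\lesssim\delta\|w\|_2$ does not follow from the stated hypothesis $\|u-v\|_{\mathcal L^2_{\rho}(\Gamma)\otimes\mathcal L^{\infty}(D)}\leq\delta$, which controls only the $\rho$-average over $\Gamma$ of $\|u(y)-v(y)\|_{\mathcal L^{\infty}(D)}$, not its value at a.e.\ fixed $y$; likewise the mean value theorem step needs $\|u(y)\|_{\mathcal L^{\infty}(D)}$ bounded uniformly in $y$ so that $f''$ is evaluated on a fixed compact set. For the conclusion to hold for a.e.\ $y$ with $y$-independent constants $C(\delta)$, $c(\delta)$, one needs an a.e.-in-$y$ smallness assumption. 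This is a defect of the lemma statement rather than of your argument --- the paper glosses over the same point --- but a complete write-up should either strengthen the hypothesis to $\|u(y)-v(y)\|_{\mathcal L^{\infty}(D)}\leq\delta$ for a.e.\ $y\in\Gamma$ or accept constants depending on $y$.
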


Let $u^{h,\boldsymbol{p}}$ be the two-level solution, we have
$$
u-u^{h,\boldsymbol{p}} = (u-u_h)+(u_h-u^h) + (u^h-u^{h,\boldsymbol{p}}) = (u-u_h)+(u_h-u^h)+(u^h-\mathcal{I}_{\boldsymbol{p}}(u^h)),
$$
where $u^h$ is the semi-discrete solution satisfying the equation (\ref{Lweakform})
and $u^h-\mathcal{I}_{\boldsymbol{p}}(u^h)$ is the Lagrange interpolation error. 

Following \cite{babuvska2010stochastic}, we introduce a weighted continuous space
$$
\mathcal{C}_{\sigma}^0(\Gamma;V)\equiv\{v:\Gamma\rightarrow V, v {\rm\ is\ continuous\ in\ } y, \max_{y\in\Gamma}\|\sigma(y)v(y)\|_{V}<+\infty\},
$$
where $V$ is a Banach space with functions defined on $D$ and $\sigma(y)=\Pi_{n=1}^N\sigma_n(y_n)$ with
\begin{equation*}
\sigma_n(y_n) = 
\left\{
\begin{aligned}
&1,\qquad\qquad\qquad\qquad\qquad\ \ \ \ \ {\rm if\ \Gamma_n\ is\ bounded},\\
&e^{-\alpha_n|y_n|},\quad {\rm for\ some}\ \alpha_n>0,\ {\rm if\ \Gamma_n\ is\ unbounded},
\end{aligned}
\right.
\end{equation*}

and the following assumptions,
 \begin{itemize}
\item[(A1)]
 $f\in\mathcal{C}_{\sigma}^0(\Gamma;L^2(D))$ \\
 \item[(A2)] the joint probability density $\rho$ satisfies
 \begin{equation*}
 \rho(y)\leq C_{\rho}e^{-\Sigma_{n=1}^N(\delta_ny_n)^2},
 \quad \forall y\in\Gamma,
 \end{equation*}
 for some constant $C_{\rho}>0$ and $\delta_n$ strictly positive if $\Gamma_n$ is unbounded and zeros otherwise.
 \end{itemize}
 
The following interpolation error estimate is needed in our analysis.

 \begin{lemma}\label{interpolation-error}
 \cite{babuvska2010stochastic} There exist positive constants $r_n, n=1,2,\cdots, N$, independent of $\boldsymbol{p}$, such that for any $v\in\mathcal{C}^0_{\sigma}(\Gamma;\mathcal H^1_0(D))$,
 \begin{align*}
 \|v-\mathcal{I}_{\boldsymbol{p}}v\|_{\mathcal L^2_{\rho}(\Gamma)\otimes \mathcal H^1_0(D)}\lesssim\sum_{n=1}^N\beta_n(p_n)e^{-r_np_n^{\theta_n}},
\end{align*}
where
\begin{itemize}
\item if \ $\Gamma_n$ is bounded \ $
\left\{
\begin{aligned}
&\theta_n = \beta_n  =1 ;\\
&r_n = \log\left[\frac{2\tau_n}{|\Gamma_n|}\left(1+\sqrt{1+\frac{|\Gamma_n|^2}{4(\tau_n)^2}}\right)\right],
\end{aligned}
\right.
$
\vspace{0.3cm}
\item if \ $\Gamma_n$ is unbounded
$
\left\{
\begin{aligned}
&\theta_n = 1/2,\ \ \beta_n = \mathcal{O}(\sqrt{p_n}) ;\\
&r_n = \tau_n\delta_n,
\end{aligned}
\right.
$
\end{itemize}
$\tau_n$ is smaller than the distance between $\Gamma_n$ and the nearest singularity in the complex plane and $\delta_n$ is strictly positive value when $\Gamma_n$ is unbounded such that the joint probability density $\rho$ satisfies
$$
\rho(y)\leq C_{\rho}e^{{-\sum_{n=1}^N}(\delta_ny_n)^2},\quad \forall\  y\in\Gamma,
$$
for some $C_{\rho}>0$. 
\end{lemma}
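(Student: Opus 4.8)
The plan is to prove the estimate in three stages, following the strategy of \cite{babuvska2010stochastic} adapted to the semilinear setting: (i) establish that the semi-discrete solution $u_h(y)$ admits, in each coordinate $y_n$, an analytic extension into a complex region with values in $\mathcal{H}^1_0(D)$; (ii) apply the classical univariate interpolation error bound for analytic Banach-space-valued functions at the Gauss points; and (iii) tensorize over the $N$ directions by a telescoping identity.

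First I would establish the analytic regularity of $u_h$ with respect to each parameter. Fixing all coordinates except $y_n$ and regarding $u_h$ as the solution of the finite-dimensional nonlinear system (\ref{weakformula}) posed on $\mathcal{X}_h(D)$, I note that the coefficient $a_N$ is \emph{affine} in $y_n$ by the truncated KL expansion, hence extends to an entire function of the complex variable $z_n$, while $f$ is smooth. The Jacobian of this algebraic system with respect to the finite element coefficient vector is precisely the stiffness matrix of the linearized form $A_{u_h(y)}(\cdot,\cdot)$, which is nonsingular by the discrete inf--sup stability of Lemma \ref{quasi-linear-operator}. The complex analytic implicit function theorem then yields an analytic extension $z_n \mapsto u_h(\dots,z_n,\dots) \in \mathcal{X}_h(D) \subset \mathcal{H}^1_0(D)$, which persists as long as the complex-extended form built from $a_N(z_n,\cdot)$ retains uniform coercivity and the associated linearized operator stays invertible. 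The supremum of admissible $|\Im z_n|$, equivalently the location of the nearest complex singularity, defines the parameter $\tau_n$ of the statement; in the unbounded case one additionally tracks the growth of $u_h$ as $|z_n|\to\infty$, which together with the tail bound $\rho(y) \le C_\rho e^{-\sum_n (\delta_n y_n)^2}$ fixes $\delta_n$. Crucially, $\tau_n$ is governed only by $a_N$ and the covariance, not by $\mathcal{X}_h$, so the resulting constants are independent of $h$ and $p$.

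Next I would invoke the univariate estimate. For a function $v \in \mathcal{C}^0(\Gamma_n; \mathcal{H}^1_0(D))$ admitting a bounded analytic extension to a complex region $\Sigma_n$ (a Bernstein ellipse when $\Gamma_n$ is bounded, a strip of half-width $\tau_n$ when $\Gamma_n$ is unbounded), the Lagrange interpolation error at the $p_n+1$ Gauss points associated with $\rho_n$ satisfies
$$
\|v - \mathcal{I}_{p_n} v\|_{\mathcal{L}^2_{\rho_n}(\Gamma_n)\otimes \mathcal{H}^1_0(D)} \lesssim \beta_n(p_n)\, e^{-r_n p_n^{\theta_n}} \max_{z \in \Sigma_n}\|v(z)\|_{\mathcal{H}^1_0(D)},
$$
with $r_n,\theta_n,\beta_n$ exactly as listed: the ellipse case yields geometric decay $\theta_n=1$ with $r_n=\log[\cdots]$, while the strip case yields sub-geometric decay $\theta_n=1/2$ with the $\mathcal{O}(\sqrt{p_n})$ prefactor. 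Applying this to $v=u_h$, whose analytic extension was produced in the first step, bounds the error in a single direction. Finally I would tensorize: writing $\mathcal{I}_p = \mathcal{I}_{p_1}\circ\cdots\circ\mathcal{I}_{p_N}$ as a composition of commuting univariate interpolants and using the telescoping identity
$$
\mathrm{Id}-\mathcal{I}_p = \sum_{n=1}^N \mathcal{I}_{p_1}\cdots \mathcal{I}_{p_{n-1}}\bigl(\mathrm{Id}-\mathcal{I}_{p_n}\bigr),
$$
I would bound each summand by the univariate estimate in variable $n$, using that interpolation in the remaining variables preserves analyticity and is stable in the $\mathcal{L}^2_\rho$ norm up to factors absorbed into $\beta_n$. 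Summing over $n$ gives $\sum_{n=1}^N \beta_n(p_n)e^{-r_n p_n^{\theta_n}}$, as claimed.

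I expect the main obstacle to be the first stage: rigorously establishing the complex-analytic extension of the semilinear semi-discrete solution $u_h$ and, above all, quantifying the width $\tau_n$ of the analyticity region by locating the nearest singularity of $z_n\mapsto u_h$. This is where the nonlinearity and the coercivity of the complexified coefficient interact, and where the dichotomy between the bounded (ellipse) and unbounded (strip, with Gaussian weight) cases originates; by contrast, the univariate interpolation bound and the telescoping argument are comparatively routine once analyticity with an explicit $\tau_n$ is in hand.
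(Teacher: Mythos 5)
Your outline is essentially correct, but note what the paper actually does here: it offers no proof at all, stating only that the lemma is ``shown in \cite{babuvska2010stochastic}.'' Your three-stage plan (analytic extension in each $y_n$, univariate interpolation bound on a Bernstein ellipse or a strip with Gaussian weight, tensorization by the telescoping identity) is precisely the argument of that cited reference, so in that sense you are following the same route as the proof the paper delegates. Where you go beyond the paper is in recognizing that the citation does not literally apply: \cite{babuvska2010stochastic} establishes analytic regularity for \emph{linear} elliptic problems with affine coefficient dependence, whereas the lemma here is invoked for $u_h$ solving the semilinear problem (\ref{weakformula}) (and, via Theorem \ref{random-error}, for $u^h$). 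Your proposal to recover analyticity through the complex-analytic implicit function theorem, using the discrete inf--sup stability of Lemma \ref{quasi-linear-operator} as nonsingularity of the Jacobian, is the right repair and fills a gap the paper silently passes over. Two points in your first stage still need care, as you partly acknowledge: the IFT gives only \emph{local} analyticity, so obtaining a region of uniform half-width $\tau_n$ requires a continuation argument with invertibility bounds uniform along $\Gamma_n$ (nontrivial in the unbounded case, where compactness fails and growth of $u_h(z_n)$ must be controlled against the Gaussian tail of $\rho$); and your claim that $\tau_n$ is governed ``only by $a_N$ and the covariance'' is not automatic for the semilinear problem, since singularities of the branch $z_n\mapsto u_h$ can also be created by $f$ (e.g., degeneration of the linearized operator), and uniformity of $\tau_n$ in $h$ must be argued, for instance via $\|u-u_h\|_{\mathcal L^\infty}$ smallness from Lemma \ref{error4fem} so that Lemma \ref{quasi-linear-operator} applies with constants independent of $h$. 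With those caveats addressed, your sketch is a sound and more honest proof of Lemma \ref{interpolation-error} than the bare citation the paper provides.
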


By Lemma {\ref{interpolation-error}} and (\ref{sobolevinequality}), we have the following result.

\begin{theorem}{\label{random-error}} Let $\mathcal{I}_{\boldsymbol{p}}(u^h)$ be the Lagrange interpolation of $u^h$ with $N_{\boldsymbol{p}}$ collocation points, then, the following result holds
$$
\|u^h-\mathcal{I}_{\boldsymbol{p}}u^h\|_{\mathcal L^2_{\rho}(\Gamma)\otimes \mathcal H_0^1(D)} \lesssim\sum_{n=1}^N\beta_n(p_n)e^{-r_np_n^{\theta_n}},
$$
where parameters $\beta_n, r_n, \theta_n$ are defined in Lemma {\ref{interpolation-error}}.
\end{theorem}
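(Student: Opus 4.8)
The plan is to reduce the statement to the abstract interpolation estimate already recorded in Lemma~\ref{interpolation-error}, whose proof (taken from \cite{babuvska2010stochastic}) applies to \emph{any} function of $y$ that extends analytically, component by component, into a suitable region of the complex plane with uniformly bounded extension. Since Lemma~\ref{interpolation-error} is stated for the semilinear semi-discrete solution $u_h$ while the target bound here carries the \emph{same} rate parameters $\beta_n,r_n,\theta_n$, it suffices to show that $u^h(y)$, the solution of the linearized semi-discrete problem (\ref{Lweakform}), possesses exactly the same analytic regularity in $y$ as $u_h$. Once that analyticity is established, the interpolation estimate is a verbatim application with $u_h$ replaced by $u^h$.

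First I would verify that, for $a.e.$ fixed $x$, the map $y\mapsto u^h(y)$ admits an analytic continuation in each coordinate $y_n$ to a complex neighborhood of $\Gamma_n$. The essential observation is that (\ref{Lweakform}) is a \emph{linear} elliptic problem whose coefficients and right-hand side depend analytically on $y$: the diffusion coefficient $a_N(y,x)$ is affine in $y$ by the truncated KL expansion, while $f'(u_{H,P}(y))$ and the load $-f(u_{H,P}(y))+f'(u_{H,P}(y))u_{H,P}(y)$ are smooth functions of $u_{H,P}(y)=(\mathcal{I}_P u_H)(y)$, which is a global polynomial in $y$ and hence entire. Consequently the whole linear operator and data are analytic in $y$ on any tube on which $a_N$ retains a positive real part.

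The analytic continuation with uniform bounds is then obtained by the standard argument: complexify $y_n\to z_n$ and use the discrete inf-sup stability of $A_{v(y)}$ from Lemma~\ref{quasi-linear-operator} to invert the complexified problem. Differentiating (\ref{Lweakform}) repeatedly in $z_n$ yields a recursion for $\partial_{z_n}^k u^h$ in which each derivative of the operator and data is controlled by the analyticity of $a_N$ and $u_{H,P}$, and each inversion costs a factor bounded by the uniform stability constant $c(\delta)$. This produces the Cauchy-type bounds on the Taylor coefficients that characterize analyticity in the strip/ellipse of half-width governed by $\tau_n$, the distance from $\Gamma_n$ to the nearest singularity, exactly as for the nonlinear solution $u_h$. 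Because $u_{H,P}$ is entire, it imposes no additional constraint on this region, so the width---and therefore $r_n,\theta_n$ and the prefactor $\beta_n$---coincide with those in Lemma~\ref{interpolation-error}.

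The main obstacle is showing that the inf-sup/coercivity stability of $A_{v(y)}$ persists as $y_n$ is pushed into the complex plane, with a constant that stays uniformly bounded over the tube $\Sigma_n(\tau_n)$ and over $\Gamma$. This requires that $\mathrm{Re}\,a_N(z,x)$ remain bounded away from zero---which is precisely what fixes the admissible tube width $\tau_n$---and that the zeroth-order perturbation $f'(u_{H,P}(z))$ not destroy the lower bound; both follow from the coercivity assumption (\ref{coercevity}), the smoothness of $f$ and $f'$, and the closeness $\|u-u_{H,P}\|_{\mathcal L^2_\rho(\Gamma)\otimes \mathcal L^\infty(D)}\le\delta$ needed to invoke Lemma~\ref{quasi-linear-operator}. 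Once this uniform complex stability is secured, applying Lemma~\ref{interpolation-error} to $u^h$ yields the asserted bound and completes the proof.
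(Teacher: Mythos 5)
Your proposal is correct and coincides with the paper's (implicit) justification: the paper states Theorem \ref{random-error} with no proof at all, treating it as a direct consequence of the interpolation theory of \cite{babuvska2010stochastic} already recorded in Lemma \ref{interpolation-error}, which is exactly the reduction you perform. Your writeup simply supplies the verification the paper leaves unstated --- that $u^h$, as the solution of a linear elliptic problem with affine-in-$y$ coefficient $a_N$, entire (polynomial) data built from $u_{H,P}$, and uniformly stable complexified operator, inherits the same analytic extension in $y$, so the abstract estimate applies verbatim.
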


For $\Gamma$ bounded, we introduce 
$$
\tilde{\mathcal{C}}^{\infty}(\Gamma;V)\equiv\left\{\phi\left|\frac{|\Gamma|^{m+1}}{(m+1)!}\left\|\frac{\partial^{m+1}\phi}{\partial y^{m+1}}\right\|_{\mathcal{C}_{\sigma}^0(\Gamma;V)}\right.\leq C\|\phi\|_{\mathcal{C}_{\sigma}^0(\Gamma;V)}, \forall m\in\mathbb{N}\right\},
$$
where $C$ is independent of $m$ and $V$ is a Banach space.

For $\Gamma$ unbounded, we need the following lemma.
\begin{lemma}\label{unboundgamma}
Let $v\in\mathcal{C}_{\sigma}^{0}(\mathbb{R},V)$, we suppose $v$ admits an analytic extension in the strip of the complex plane $\Sigma(\mathbb{R};\tau)=\{z\in\mathbb{C}, {\rm dist}(z,\mathbb{R})\leq\tau\}$ for some $\tau>0$, and 
\begin{align*}
\forall z=(y+\imath w)\in\Sigma(\mathbb{R};\tau),\sigma(y)\|v(z)\|_V\leq C_{v}(\tau),
\end{align*}
 then, for any $\delta>0$, there exist a constant $C$, independent of $p$, and a function $\Theta(p)=\mathcal{O}(\sqrt{p})$ such that
\begin{align}\label{IpuH-3}
\min_{\omega\in\mathcal{P}_p\otimes V}\max_{y\in\mathbb{R}}\left|\|v(y)-\omega(y)\|_Ve^{-\frac{(\delta y)^2}{8}}\right|\leq C\Theta(p)e^{-\tau\delta\sqrt{p}/\sqrt{2}}.
\end{align}
\end{lemma}

\begin{proof}
The proof follows the same procedure as Lemma 4.6 in \cite{babuvska2010stochastic}.
\end{proof}

Next, we given an estimate for the interpolation error in $\mathcal{L}_{\rho}^4(\Gamma)\otimes V$ norm. 
\begin{lemma}\label{interpolation-error-L4}
 There exist positive constants $\tilde{r}_n, n=1,2,\cdots, N$, independent of $\boldsymbol{p}$, such that
 \begin{align*}
 \|v-\mathcal{I}_{\boldsymbol{p}}v\|_{\mathcal L^4_{\rho}(\Gamma)\otimes  V}\lesssim\sum_{n=1}^N\beta_n(p_n)e^{-\tilde{r}_np_n^{\theta_n}},
\end{align*}
for any $v\in\mathcal{C}_{\sigma}^0(\Gamma;V)$ when $\Gamma$ is unbounded and $\tilde{\mathcal{C}}^{\infty}(\Gamma;V)$ when $\Gamma$ is bounded. The parameters 
\begin{equation*}
\tilde{r}_n =
\left\{
\begin{aligned}
&\log\left[\frac{2\tau_n}{|\Gamma_n|}\left(1+\sqrt{1+\frac{|\Gamma_n|^2}{4(\tau_n)^2}}\right)\right],\qquad \;{\rm if\ \Gamma_n\ is\ bounded},\\
&\tau_n\delta_n/\sqrt{2},\qquad\qquad\qquad\qquad\qquad\qquad\ \  {\rm if\ \Gamma_n\ is\ unbounded},
\end{aligned}
\right.
\end{equation*}
and $\theta_n,\beta_n, \tau_n, \delta_n$ are defined the same as Lemma \ref{interpolation-error}.
\end{lemma}

\begin{proof}
By (A1), (A2) and use the same argument 
in \cite{babuvska2010stochastic}, we can prove $\textstyle{\mathcal{C}_{\sigma}^0(\Gamma;V)\subset \mathcal{L}^4_{\rho}(\Gamma;V)}$. 

When $\Gamma$ is bounded, for any $v$ in $\tilde{\mathcal{C}}^{\infty}(\Gamma; V)$, using the Lagrange remainder formula, we have
\begin{align*}
\|\mathcal{I}_{\boldsymbol{p}}v\|_{\mathcal{C}_{\sigma}^0(\Gamma;V)}&\leq\|\mathcal{I}_{\boldsymbol{p}}v-v\|_{\mathcal{C}_{\sigma}^0(\Gamma;V)}+\|v\|_{\mathcal{C}_{\sigma}^0(\Gamma;V)}\\
& \leq(C+1)\|v\|_{\mathcal{C}_{\sigma}^0(\Gamma;V)},
\end{align*}
which implies
\begin{align*}
\|\mathcal{I}_{\boldsymbol{p}}v\|_{\mathcal{L}_{\rho}^4(\Gamma;V)}
& \lesssim \|v\|_{\mathcal{C}_{\sigma}^0(\Gamma;V)}.
\end{align*}
Moreover, since $\textstyle{\mathcal{I}_{\boldsymbol{p}}\omega = \omega, \forall \omega \in \mathcal{P}_{\boldsymbol{p}}(\Gamma)\otimes V}$, the following estimate holds for any $\textstyle{v\in\tilde{\mathcal{C}}^{\infty}(\Gamma;V)}$:
\begin{align}\nonumber
\|v-\mathcal{I}_{\boldsymbol{p}}v\|_{\mathcal{L}^4_{\rho}(\Gamma;V)}&\leq\|v-\omega\|_{\mathcal{L}^4_{\rho}(\Gamma;V)}+\|\mathcal{I}_{\boldsymbol{p}}(\omega-v)\|_{\mathcal{L}^4_{\rho}(\Gamma;V)}\\ \nonumber
&\lesssim \|v-\omega\|_{\mathcal{C}^0_{\sigma}(\Gamma;V)}+\|\omega-v\|_{\mathcal{C}^0_{\sigma}(\Gamma;V)}\\ \label{IpuH-1}
&\lesssim \|v-\omega\|_{\mathcal{C}^0_{\sigma}(\Gamma;V)}.
\end{align} 
By the one-dimensional argument in \cite{babuvska2010stochastic} and Lemma 4.4 therein, for any $v\in \tilde{\mathcal{C}}^\infty(\Gamma;V)$ we get
$$
\|v-\mathcal{I}_{\boldsymbol{p}}v\|_{\mathcal{L}^4_{\rho}(\Gamma;V)}\lesssim 
\min_{w\in \mathcal{P}_{\boldsymbol{p}}\otimes V} \|v-w\|_{C^0_\sigma(\Gamma;V)}
\lesssim \sum_{n=1}^N e^{-p_n\log(\rho_n)},
$$
where $\textstyle{1<\rho_n=\frac{2\tau_n}{|\Gamma_n|}(1+\sqrt{1+\frac{|\Gamma_n|^2}{4\tau_n^2}}})$

For the case when $\Gamma$ is unbounded, following \cite{babuvska2010stochastic}, we can prove that $\mathcal{I}_{\boldsymbol{p}}$ is also a bounded operator from $\textstyle{\mathcal{C}_{\sigma}^0(\Gamma;V)\rightarrow \mathcal{L}^4_{\rho}(\Gamma;V)}$. By Lemma \ref{unboundgamma}, for any $v\in \mathcal{C}^0_\sigma(\Gamma;V)$ we have
\begin{equation}
\|v-\mathcal{I}_{\boldsymbol{p}}v\|_{\mathcal{L}^4_\rho(\Gamma; V)}\lesssim 
\min_{w\in \mathcal{P}_{\boldsymbol{p}}\otimes V} \|v-w\|_{C^0_G(\Gamma;V)}
\lesssim \sum_{n=1}^N \Theta(p_n)e^{-\tau_n\delta_n\sqrt{p_n}/\sqrt{2}},
\label{IpuH-2}
\end{equation}
where $G(y)=\Pi_{n=1}^N G_n(y_n)$ and $G_n(y_n)=e^{-(\delta_ny_n)^2/8}$.

%

Using the isomorphic property between $\mathcal{L}^4_{\rho}(\Gamma,V)$ and $\mathcal{L}^4_{\rho}(\Gamma)\otimes V$, the conclusion follows.
\end{proof}

From the above lemmas, we have the following error estimate for the semi-discrete solutions.

\begin{theorem}\label{two-grid-error} Let $u^h$ be the semi-discrete solution satisfying (\ref{Lweakform}) and $u_h$ be the semi-discrete solution of equation (\ref{weakformula}). Then,
\begin{align*}
\|u_h-u^h\|_{\mathcal L^2_{\rho}(\Gamma)\otimes \mathcal H_0^1(D)}\lesssim H^4 +\sum_{n=1}^N\beta^2_n(P_n)e^{-2\tilde{r}_nP_n^{\theta_n}},
\end{align*}
\end{theorem}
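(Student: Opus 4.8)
The plan is to derive and then exploit the linearized equation satisfied by the difference $e := u_h - u^h$ at a fixed parameter value. Writing $A_{v(y)}(\cdot,\cdot)$ for the bilinear form attached to $L_{v(y)}$ in Lemma \ref{quasi-linear-operator} with linearization point $v = u_{H,P}$, I would first subtract the defining relation (\ref{weakformula}) for $u_h$ from (\ref{Lweakform}) (taken with $v = u_{H,P}$) for $u^h$, and then regroup the nonlinear terms. Setting $e_c := u_h - u_{H,P}$ and using the second-order Taylor expansion
$$
f(u_h) - f(u_{H,P}) - f'(u_{H,P})(u_h - u_{H,P}) = \tfrac12 f''(\xi)\, e_c^2
$$
for an intermediate value $\xi$, while the remaining cross term $f'(u_{H,P})(u^h - u_h) = -f'(u_{H,P})e$ is absorbed into the left-hand side, one arrives at the error equation
$$
A_{u_{H,P}(y)}(e, w) = -\bigl(\tfrac12 f''(\xi)\, e_c^2,\, w\bigr), \qquad \forall\, w \in \mathcal X_h(D),
$$
valid for a.e. $y \in \Gamma$. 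This is the heart of the two-grid mechanism: the nonlinear defect is \emph{quadratic} in the coarse error $e_c$.

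Next I would invoke the discrete inf-sup estimate of Lemma \ref{quasi-linear-operator} (second bullet) at the linearization point $v = u_{H,P}$. To apply it one must first verify the smallness hypothesis $\|u - u_{H,P}\|_{\mathcal L^2_\rho(\Gamma)\otimes \mathcal L^\infty(D)} \le \delta$; I would obtain this from $\|u - u_{H,P}\|_{0,\infty} \le \|u - u_H\|_{0,\infty} + \|u_H - \mathcal I_P u_H\|_{0,\infty}$ together with the $\mathcal L^\infty$ finite-element bound in Lemma \ref{error4fem} and the interpolation bound, both small once $H$ is small and $P$ large. The inf-sup inequality then gives, for a.e. $y$,
$$
c(\delta)\,\|e(y)\|_1 \le \sup_{\chi \in \mathcal X_h(D)}\frac{A_{u_{H,P}(y)}(e,\chi)}{\|\chi\|_1} \lesssim \|e_c(y)\|_{0,4}^2,
$$
where the last step substitutes the error equation and estimates its right-hand functional by H\"older's inequality $\bigl(e_c^2,\chi\bigr) \le \|e_c\|_{0,4}^2\,\|\chi\|$, the boundedness of $f''$ on the (uniformly bounded) range of the arguments, and $\|\chi\| \le \|\chi\|_1$.

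It then remains to pass from this pointwise-in-$y$ estimate to the $\mathcal L^2_\rho(\Gamma)\otimes\mathcal H^1_0(D)$ norm and to control $e_c$. Squaring, integrating against $\rho$, and decomposing $e_c = (u_h - u) + (u - u_H) + (u_H - \mathcal I_P u_H)$, I would bound the fine- and coarse-grid finite-element errors via Lemma \ref{error4fem} in the $\mathcal L^4(D)$ setting (so $\|u - u_H\|_{0,4}\lesssim H^2$, with the $h$-term negligible since $h \ll H$) and the random-space interpolation error via (\ref{error4H}). Heuristically this yields
$$
\|u_h - u^h\|_{\mathcal L^2_\rho(\Gamma)\otimes \mathcal H^1_0(D)} \lesssim \|e_c\|^2_{\mathcal L^2_\rho(\Gamma)\otimes \mathcal L^4(D)} \lesssim H^4 + \sum_{n=1}^N \beta_n^2(P_n)\,e^{-2 r_n P_n^{\theta_n}},
$$
the squared coarse error producing exactly the $H^4$ and $\beta_n^2 e^{-2r_nP_n^{\theta_n}}$ terms.

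The main obstacle I anticipate is the transition from the pointwise bound $\|e(y)\|_1 \lesssim \|e_c(y)\|_{0,4}^2$ to the $\mathcal L^2_\rho(\Gamma)$ norm: squaring produces $\int_\Gamma \|e_c(y)\|_{0,4}^4\,\rho\,dy$, a \emph{fourth} moment in the random variable, whereas Lemma \ref{error4fem} and (\ref{error4H}) provide only second-moment ($\mathcal L^2_\rho$) control. Closing this gap rigorously requires either uniform-in-$y$ ($\mathcal L^\infty_\rho$) regularity of the errors — so that one factor $\|e_c\|_{0,4}^2$ is taken out in $\mathcal L^\infty_\rho$ while the other is integrated in $\mathcal L^2_\rho$ — or the stronger assumption that the finite-element and interpolation estimates hold with a fourth moment in $y$. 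This higher-moment bookkeeping is where I would concentrate the technical care, and it is precisely what delivers the squared right-hand side of the theorem.
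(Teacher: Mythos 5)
Your proposal follows essentially the same route as the paper's proof: subtract (\ref{weakformula}) from (\ref{Lweakform}) with $v=u_{H,P}$ to get an error equation whose right-hand side is quadratic in $u_h-u_{H,P}$ (the paper writes the Taylor remainder in integral form, $\beta=-\int_0^1(1-t)f''(u_{H,P}+t(u_h-u_{H,P}))\,dt$, rather than your Lagrange form), bound that functional via H\"older and the Sobolev inequality (\ref{sobolevinequality}), invoke the inf-sup estimate of Lemma \ref{quasi-linear-operator}, split $u_h-u_{H,P}$ into $(u_h-u_H)+(u_H-\mathcal{I}_Pu_H)$, and conclude with Lemma \ref{error4fem} and (\ref{error4H}). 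The one substantive remark concerns the fourth-moment obstacle you flag: it is genuine, and the paper does not truly resolve it either. The paper converts $\int_\Gamma\|\cdot\|^4_{0,2p}\,\rho\,dy$ into $\|(\cdot)^2\|^2_{\mathcal L^2_\rho(\Gamma)\otimes\mathcal L^p(D)}$ via the identity $\|w\|^4_{0,2p}=\|w^2\|^2_{0,p}$ and then asserts $\|(u_h-u_H)^2\|^2_{\mathcal L^2_\rho(\Gamma)\otimes\mathcal L^p(D)}\lesssim\|u_h-u_H\|^4_{\mathcal L^2_\rho(\Gamma)\otimes\mathcal L^p(D)}$; as a general functional inequality this bounds a fourth moment in $y$ by the square of a second moment, which is the reverse of Jensen's inequality, so it implicitly requires exactly the pointwise-in-$y$ (or higher-moment) control of the finite element and interpolation errors that you identify as the missing ingredient. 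Your caution on that step is therefore warranted and is, if anything, more honest than the paper's treatment.
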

where $\tilde{r}_n$ ($n=1,\dots, N$) are given in Lemma \ref{interpolation-error-L4} .
\begin{proof}

Choose $v=u_{H,\boldsymbol{P}}$ in $(\ref{Lweakform})$ and subtract (\ref{Lweakform}) from (\ref{weakformula}), we have, for a.e. $y\in\Gamma$ and for any $w\in\mathcal{X}_h(D)$, the following equation holds.

\begin{align*}
(a_N(y)\nabla(u_h-u^h)(y),\nabla w) + (f(u_h(y))-f(u_{H,\boldsymbol{P}}(y))-f'(u_{H,\boldsymbol{P}}(y))(u^h-u_{H,\boldsymbol{P}})(y),w) = 0,
\end{align*}
or 
\begin{align*}
A_{u_{H,\boldsymbol{P}}(y)}((u_h-u^h)(y),w) = (\beta(u_h(y)-u_{H,\boldsymbol{P}}(y))^2,w), \end{align*}
where 
$$
\beta = -\int_0^1(1-t)f''(u_{H,\boldsymbol{P}}(y)+t(u_h-u_{H,\boldsymbol{P}})(y))dt.
$$
By assumption, it is easy to see that $\beta$ is a uniformly bounded function on $\bar{D}$. From the H{\"o}lder inequality and (\ref{sobolevinequality}), we get
\begin{align}
(\beta(u_h(y)-u_{H,\boldsymbol{P}}(y))^2,w)\lesssim\|(u_h(y)-u_{H,\boldsymbol{P}}(y))^2\|_{0,\frac{p'}{2}}\|w\|_{0,\frac{p'}{p'-2}}\lesssim\|u_h(y)-u_{H,\boldsymbol{P}}(y)\|_{0,p'}^2\|w\|_1,
\end{align}
where $2\leq p'\leq \infty$ when $d=2$ and $p'=12/5$ when $d=3$.

Applying Lemma \ref{quasi-linear-operator}, for a.e. $y\in\Gamma$, we have

\begin{align*}
\|(u_h-u^h)(y)\|_{1}&\lesssim\sup_{w\in \mathcal X_h(D)}\frac{A_{u_{H,\boldsymbol{P}}(y)}((u_h-u^h)(y),w)}{\|w\|_1} = \sup_{w\in \mathcal X_h(D)}\frac{(\beta((u_h-u_{H,\boldsymbol{P}})(y))^2,w)}{\|w\|_1}\\
&\lesssim\|(u_h-u_{H,\boldsymbol{P}})(y)\|^2_{0,p'}\\
&\lesssim(\|(u_h-u_H)(y)\|_{0,p'}+\|(u_H-u_{H,\boldsymbol{P}})(y)\|_{0,p'})^2.
\end{align*}

By Cauchy-Schwartz inequality, we get
\begin{align*}
\|u_h-u^h\|^2_{\mathcal L^2_{\rho}(\Gamma)\otimes\mathcal H^1_0(D)} &=\int_{\Gamma}\int_D|\nabla(u_h-u^h)|^2dx\rho dy = \int_{\Gamma}\|u_h-u^h\|_{1}^2\rho dy\\
&\lesssim\int_{\Gamma}(\|u_h-u_{H}\|_{0,p'}+\|u_H-u_{H,\boldsymbol{P}}\|_{0,p'})^4\rho dy\\
&\lesssim\int_{\Gamma}\left(\|u_h-u_{H}\|^4_{0,p'}+\|u_H-u_{H,\boldsymbol{P}}\|^4_{0,p'}\right)\rho dy.
\end{align*}
We get
\begin{align}
\|u_h-u^h\|^2_{\mathcal L^2_{\rho}(\Gamma)\otimes\mathcal H^1_0(D)}&\lesssim\|u_h-u_{H}\|^4_{\mathcal{L}_{\rho}^4(\Gamma)\otimes\mathcal{L}^{2p'}(D)}+\|u_H-u_{H,\boldsymbol{P}}\|^4_{\mathcal{L}_{\rho}^4(\Gamma)\otimes\mathcal{L}^{2p'}(D)}.
\label{proof-final-formula}
\end{align}

By Lemma \ref{error4fem} and denote $p=2p'$ we have
\begin{align}\label{uhuHerror}
\|u_h-u_H\|_{\mathcal L^4_{\rho}(\Gamma)\otimes \mathcal L^{p}(D)}\lesssim H^2\|u\|_{\mathcal L^4_{\rho}(\Gamma)\otimes \mathcal W^{2,p}(D)}.
\end{align}

Notice $\|u_H-u_{H,\boldsymbol{P}}\|_{\mathcal L^4_{\rho}(\Gamma)\otimes \mathcal L^p(D)} = \|u_H-\mathcal{I}_{\boldsymbol{P}}(u_{H})\|_{\mathcal L^4_{\rho}(\Gamma)\otimes \mathcal L^p(D)}$, from (\ref{proof-final-formula}), (\ref{uhuHerror}) and Lemma \ref{interpolation-error-L4}, we have
$$
\|u_h-u^h\|_{\mathcal L^2_{\rho}(\Gamma)\otimes \mathcal H^1_0(D)}\lesssim H^4+\sum_{n=1}^N\beta^2_n(P_n)e^{-2\tilde{r}_nP_n^{\theta_n}}.
$$
\end{proof}
Finally, we get the following error estimates for the two-level solution.

\begin{theorem}
Let $u^{h,\boldsymbol{p}}$ be the two-level solution and $u$ be the exact solution of (\ref{KLmodel-problem}). Then, we have
\begin{align}\label{H1error}
\|u-u^{h,\boldsymbol{p}}\|_{\mathcal L^2_{\rho}(\Gamma)\otimes \mathcal H^1_0(D)}&\lesssim h+H^4+\sum_{n=1}^N\beta^2_n(P_n)e^{-2\bar{r}_nP_n^{\theta_n}}+\sum_{n=1}^N\beta_n(p_n)e^{-\bar{r}_np_n^{\theta_n}},\\
\label{L2error}
\|u-u^{h,\boldsymbol{p}}\|_{\mathcal L^2_{\rho}(\Gamma)\otimes \mathcal L^2(D)}&\lesssim h^2+H^4+\sum_{n=1}^N\beta^2_n(P_n)e^{-2\bar{r}_nP_n^{\theta_n}}+\sum_{n=1}^N\beta_n(p_n)e^{-\bar{r}_np_n^{\theta_n}}.
\end{align}
where $\bar{r}_n=\min\{\tilde{r}_n, r_n\}$ and
$\beta_n,\ \theta_n$ are constants from Lemma \ref{interpolation-error}.
\end{theorem}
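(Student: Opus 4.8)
The plan is to prove both estimates at once by combining the three error bounds already established, via the triangle inequality applied to the decomposition recorded just above the statement. Writing
\[
u - u^{h,p} = (u - u_h) + (u_h - u^h) + (u^h - \mathcal{I}_p u^h),
\]
I would estimate the three pieces separately in each of the two tensor-product norms and then add them.

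For the first piece, $u - u_h$ is the semi-discrete finite element error, so Lemma~\ref{error4fem} with $p=2$ gives $\|u - u_h\|_{\mathcal L^2_{\rho}(\Gamma)\otimes \mathcal H^1_0(D)} \lesssim h\,\|u\|_{\mathcal L^2_{\rho}(\Gamma)\otimes \mathcal W^{2,2}(D)}$ together with $\|u - u_h\|_{\mathcal L^2_{\rho}(\Gamma)\otimes \mathcal L^2(D)} \lesssim h^2\,\|u\|_{\mathcal L^2_{\rho}(\Gamma)\otimes \mathcal W^{2,2}(D)}$; this term is the only source of the difference in the power of $h$ between the two estimates. For the second piece, Theorem~\ref{two-grid-error} furnishes $\|u_h - u^h\|_{\mathcal L^2_{\rho}(\Gamma)\otimes \mathcal H^1_0(D)} \lesssim H^4 + \sum_{n=1}^N \beta_n^2(P_n)\, e^{-2 r_n P_n^{\theta_n}}$, and for the third piece Theorem~\ref{random-error} yields $\|u^h - \mathcal{I}_p u^h\|_{\mathcal L^2_{\rho}(\Gamma)\otimes \mathcal H^1_0(D)} \lesssim \sum_{n=1}^N \beta_n(p_n)\, e^{-r_n p_n^{\theta_n}}$.

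Adding the three bounds in the $\mathcal H^1_0$ norm gives \eqref{H1error} immediately, since the regularity assumption $u \in \mathcal L^2_{\rho}(\Gamma)\otimes(\mathcal H^1_0(D)\cap \mathcal H^2(D))$ ensures the $\mathcal W^{2,2}$-norm of $u$ is finite and may be absorbed into the hidden constant. For the $\mathcal L^2$ estimate \eqref{L2error}, I would retain the sharper $h^2$ bound for the finite element term, and for the two remaining terms simply invoke the continuous embedding $\|\cdot\|_{\mathcal L^2_{\rho}(\Gamma)\otimes \mathcal L^2(D)} \lesssim \|\cdot\|_{\mathcal L^2_{\rho}(\Gamma)\otimes \mathcal H^1_0(D)}$ (Poincar\'e's inequality in $D$), so that the $H^4$ and the two exponential contributions transfer unchanged.

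Because the substantive work has already been carried out in Theorems~\ref{two-grid-error} and \ref{random-error}, the final assembly presents no genuine difficulty; the only point requiring care is norm bookkeeping. Specifically, the two-grid and stochastic interpolation contributions are available only in the $\mathcal H^1_0$ norm, so the $\mathcal L^2$ estimate cannot improve their order in $h$, and only the deterministic finite element term benefits from the extra power of $h$. Recognizing that this is the sole place where the two estimates diverge is the one subtlety worth flagging in the write-up.
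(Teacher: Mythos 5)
Your proposal is correct and follows essentially the same route as the paper: the identical three-term decomposition $u-u^{h,p}=(u-u_h)+(u_h-u^h)+(u^h-\mathcal{I}_pu^h)$, with Lemma~\ref{error4fem} (at $p=2$) for the finite element term, Theorem~\ref{two-grid-error} for the two-grid term, and Theorem~\ref{random-error} for the interpolation term, bounding the $\mathcal L^2$ norms of the latter two pieces by their $\mathcal H^1_0$ norms exactly as the paper does. Your explicit remark that only the finite element term improves from $h$ to $h^2$ in the $\mathcal L^2$ estimate is a correct and slightly more careful articulation of the bookkeeping the paper performs implicitly.
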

\begin{proof}
Estimation (\ref{H1error}) follows from Lemma \ref{error4fem} and \ref{interpolation-error}, Theorem \ref{random-error} and \ref{two-grid-error}. For ($\ref{L2error}$), we have
\begin{align*}
\|u-u^{h,\boldsymbol{p}}\|_{\mathcal L_{\rho}^2(\Gamma)\otimes \mathcal L^2(D)}&\lesssim \|u-u_h\|_{\mathcal L_{\rho}^2(\Gamma)\otimes \mathcal L^2(D)}+\|u_h-u^h\|_{\mathcal L_{\rho}^2(\Gamma)\otimes \mathcal L^2(D)}+\|u^h-\mathcal{I}_{\boldsymbol{p}}u^h\|_{\mathcal L_{\rho}^2(\Gamma)\otimes \mathcal L^2(D)}\\
&\lesssim  \|u-u_h\|_{\mathcal L_{\rho}^2(\Gamma)\otimes \mathcal L^2(D)}+\|u_h-u^h\|_{\mathcal L_{\rho}^2(\Gamma)\otimes \mathcal H_0^1(D)}+\|u^h-\mathcal{I}_{\boldsymbol{p}} u^h\|_{\mathcal L_{\rho}^2(\Gamma)\otimes \mathcal L^2(D)}\\
&\lesssim h^2+H^4+\sum_{n=1}^N\beta^2_n(P_n)e^{-2\bar{r}_nP_n^{\theta_n}}+\sum_{n=1}^N\beta_n(p_n)e^{-\bar{r}_np_n^{\theta_n}}.
\end{align*}
\end{proof}

\section{Numerical experiments}
\label{sec:numericalresult} 
In this section, we present some numerical experiments to verify the theoretical results given in Section \ref{sec:convergence}. Our model problem is
\begin{align*}
-\nabla\cdot(a\nabla u) + u^3 &=g,\quad {\rm in}\ \Gamma\times D,\\
u &= 0,\quad {\rm on} \ \Gamma\times\partial D,
\end{align*}
where $D = (-1, 1)^2$, $\Gamma=(-1, 1)^2$ in Example 1, and $\Gamma=(-1, 1)^4$ in Example 2. We first choose a particular diffusion coefficient such that the exact solution is available. Then, we consider a case with random coefficient given by truncated KL expansion. 

We use piecewise linear finite element method for the spatial discretization. The stopping criterion for Newton iteration is chosen to be the relative error between two adjacent iterates less than a prescribed tolerance, i.e.,
$$
\frac{\|U_H^{l+1}-U_H^{l}\|}{\|U_H^{l+1}\|}\leq\epsilon.
$$ 
The tolerance $\epsilon = 10^{-2}$ is used in our numerical tests reported below, and a tighter tolerance does not yield better overall solution accuracy for these examples. In general, for problems which need tighter Newton tolerance in order to obtain better accuracy, the computational savings of the two-level method will be greater. For the linear system of equations, we use the algebraic multigrid method with tolerance $10^{-9}$. 

The numerical experiments are conducted on a desktop computer with 3.5 GHz 6-core Intel Xeon E5 CPU and 16 GB 1867 MHz DDR3 memory. The MATLAB finite element package iFEM is used for the implementation \cite{long2009ifem}.

{\bf Example $1$}: We choose the following random coefficient
$$
a(Y_1(\omega),Y_2(\omega), x_1, x_2) = 3 + Y_1(\omega) + Y_2(\omega),
$$
where the random variables $Y_n(\omega),\ (n=  1,2)$ are independent and identically distributed, satisfying uniform distribution. The collocation points are zeros of the Legendre polynomials. The right-hand side function $g$ is defined by
$$
g(\omega,x)=2\pi^2 \sin(\pi x_1)\sin(\pi x_2)+\left(\frac{\sin(\pi x_1)\sin(\pi x_2)}{a(Y_1(\omega),Y_2(\omega),x_1,x_2)}\right)^3.
$$
Hence, the exact solution is
$$
u(Y_1(\omega),Y_2(\omega), x_1, x_2) = \frac{1}{a(Y_1,Y_2,x_1,x_2)}\sin(\pi x_1)\sin(\pi x_2).
$$

To see the order of accuracy in physical space, we choose mesh size pairs $(H,h)$=$(\textstyle{\frac{1}{2}}, \textstyle{\frac{1}{4}})$, $(\textstyle{\frac{1}{4}}, \textstyle{\frac{1}{16}})$, $(\textstyle{\frac{1}{8}}, \textstyle{\frac{1}{64}})$, and $(\textstyle{\frac{1}{16}}, \textstyle{\frac{1}{256}})$, and fix the polynomial space pair with $(\boldsymbol{P},\boldsymbol{p}) = (4, 8)$ (note that $(\boldsymbol{P},\boldsymbol{p})=(4,8)$ means $P_n=4, p_n=8$ for $n=1,\cdots,N$) such that the overall approximation error is dominated by the spatial discretization error. Similarly, for the order of accuracy in the stochastic domain, we choose a fixed mesh size pair $(H, h) = (\textstyle{\frac{1}{32}}, \textstyle{\frac{1}{1024}})$ and consider the polynomial space pairs with $(\boldsymbol{P},\boldsymbol{p})=(1,2), (2,4), (3,6), (4,8).$ 

From the left figure of Fig. \ref{conorderex1}, we observe that the convergence order is $\mathcal{O}(h^2)$ (equivalently, $\mathcal{O}(\bold{N}^{-1})$ where $\bold{N}$ is the total number of degrees of freedom in space) in the $\mathcal L^2_{\rho}(\Gamma)\otimes \mathcal L^2(D)$ norm and is $\mathcal{O}(h)$ (equivalently, $\mathcal{O}(\bold{N}^{-0.5})$) in the $\mathcal L^2_{\rho}(\Gamma)\otimes \mathcal H_0^1(D)$ norm which are consistent with the theory.  The right figure of Fig. \ref{conorderex1} shows that the error decays exponentially with respect to the polynomial degree $p$ which is also consistent with the theoretical results. For this example, since the distance between $\Gamma_n$ and the nearest singularity is $\sqrt{2}$, we choose $\tau_n=\sqrt{2}$. From Lemma \ref{interpolation-error}, we get the upper bound $1.6536$ for $r_n$. By using the linear least squares fitting, we get numerical value of $r_n$ as $1.3394$ which is less than the theoretical upper bound.
\begin{figure}
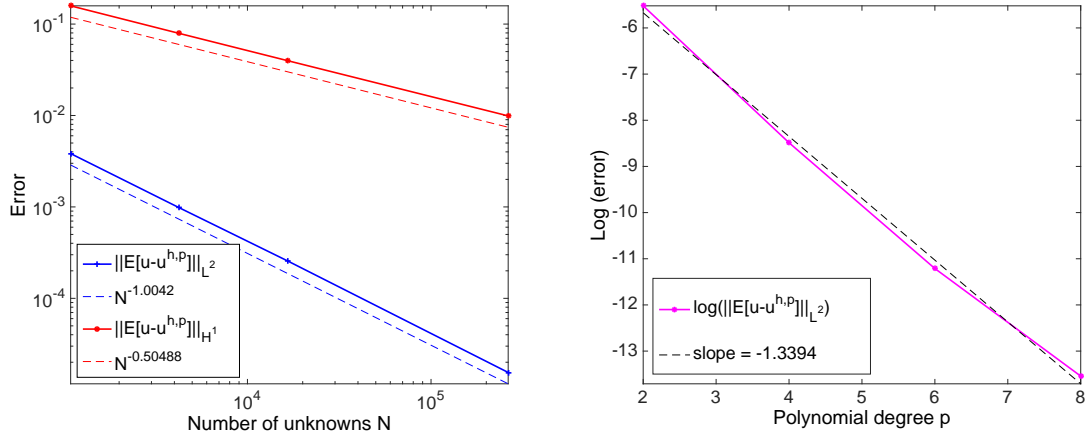

\centering
\includegraphics[width=7.5cm]{./ex1-herror}
\includegraphics[width=7.5cm]{./ex1-perror}
\caption{Example 1: the convergence rate in physical space (left) 
and random space (right).}
\label{conorderex1}
\vspace{0cm}
\end{figure}
Numerical results presented in Table \ref{H2hex1} show that two-level solution has the same accuracy as the standard stochastic collocation solution using $\mathcal{T}_h(D)$ and $\mathcal{P}_{\boldsymbol{p}}(\Gamma)$ in $\mathcal L_{\rho}^2(\Gamma)\otimes \mathcal L^2(D)$ and $\mathcal L_{\rho}^2(\Gamma)\otimes \mathcal H_0^1(D)$ norms when $h=H^2$. Table \ref{H4hex1} demonstrates that accuracy of the two-level solution is the same as the fine level stochastic collocation solution in $\mathcal L_{\rho}^2(\Gamma)\otimes \mathcal H_0^1(D)$ norm when $h=H^4$. To better understand the accuracy of the solution ($u^{h,\boldsymbol{p}}$) obtained by combining the two-grid finite element discretization in the physical space with the two-level collocation method in the random domain, we also report in Tables \ref{H2hex1} and \ref{H4hex1} results from the solution ($u_{\boldsymbol{P}}^h$) obtained by using low-level collocation for both semilinear equations on coarse mesh and linearized equations on fine mesh, and results from the solution ($u_{\boldsymbol{p}}^h$) obtained by using high-level collocation for both semilinear equations on coarse mesh and linearized equations on fine mesh. It is worth pointing out that the accuracy of the two-level solution is one order of magnitude worse than the fine level stochastic collocation solution in $\mathcal L_{\rho}^2(\Gamma)\otimes \mathcal L^2(D)$ norm in the case $h=H^4$, but not in the case $h=H^2$.


In terms of computational complexity for this example, when using standard stochastic collocation method with $h=1/256$ and $p=8$, we need to solve $170$ ($=73 \times 2 + 8 \times 3$) linear systems of equations with $263169$ unknowns; and for the two-level stochastic collocation method with $(H, h)=(1/16, 1/256)$ and $(P, p)=(4, 8)$, we only solve $81$ linear systems of equations with $263169$ unknowns and $51$ ($=24\times 2 + 1\times 3$) linear systems of equations with $1089$ unknowns. Since the computations are dominated by solving the large linear systems of equations, the actual runtime of the two-level collocation method is less than a half of the runtime for standard collocation method.


\begin{table}[H]
\centering
{\begin{tabular}{||c|c|c||}
\hline
& $\mathcal L^2_{\rho}(\Gamma)\otimes \mathcal L^2(D)-{\rm norm}$ & $\mathcal L^2_{\rho}(\Gamma)\otimes \mathcal H^{1}_0(D)-{\rm norm}$\\
\hline
$ u-u_{H,\boldsymbol{P}} $     & $0.0042$       & $0.1668$ \\[4pt]
\hline
$ u-u_{\boldsymbol{P}}^{h} $   & $3.6165E-4$ & $0.0106$ \\[4pt]
\hline
$ u-u^{h,\boldsymbol{p}} $      & $1.6187E-5$ & $0.0105$ \\[4pt]
\hline
$ u-u_{\boldsymbol{p}}^{h} $   & $1.6182E-5$ & $0.0105$ \\[4pt]
\hline
$ u-u_{h,\boldsymbol{p}} $     & $1.5940E-5$ & $0.0105$ \\[4pt]
\hline
\end{tabular}}
\vspace{0.3cm}
\caption{Example 1: approximation errors with $(H, h) = (1/16,1/256),\ (\boldsymbol{P},\boldsymbol{p}) =(4,8)$}
\label{H2hex1}
\end{table}

\begin{table}[H]
\centering
{\begin{tabular}{||c|c|c||}
\hline
& $\mathcal L^2_{\rho}(\Gamma)\otimes \mathcal L^2(D)-{\rm norm}$ & $\mathcal L^2_{\rho}(\Gamma)\otimes \mathcal H^{1}_0(D)-{\rm norm}$\\
\hline
$ u-u_{H,\boldsymbol{P}} $  & $0.0167$ & $0.6417$ \\[4pt]
\hline
$ u-u_{\boldsymbol{P}}^{h} $ & $3.7737E-4$ & $0.0106$ \\[4pt]
\hline
$ u-u^{h,\boldsymbol{p}} $   & $1.2760E-4$ & $0.0105$ \\[4pt]
\hline
$ u-u_{\boldsymbol{p}}^{h} $   & $1.2672E-4$ & $0.0105$ \\[4pt]
\hline
$ u-u_{h,\boldsymbol{p}} $  & $1.5960E-5$ & $0.0105$ \\[4pt]
\hline
\end{tabular}}
\vspace{0.3cm}
\caption{Example 1: approximation errors with $(H, h)=(1/4, 1/256),\ (\boldsymbol{P},\boldsymbol{p}) =(4,8)$}
\label{H4hex1}
\end{table}

{\textbf{Example $2$}}: We choose
\begin{align*}
a(\omega, x) =  \bar{a}(x)+\sum_{n=1}^4\sqrt{\lambda_n}b_n(x)Y_n(\omega),
\end{align*}
where $\rho = 0.25$, $\bar{a}(x) = 1$, $\lambda_n, b_n(x)$ are the eigenvalues and eigenfunctions corresponding to the covariance function $\text{Cov}_a(x, x')=\sigma^2 \exp(-|x-x'|)$ with $\sigma = 0.6$. We choose $g=2|x|^2-1$.

Since the exact solution of this example is not available, we construct a ``reference" solution numerically by using a very fine mesh ($h = 1/512$) and a polynomial space of high degree ($\boldsymbol{p}=9$) and denote it by $u^*= u_{h,\boldsymbol{p}}$.  For the order of accuracy in the random space, we choose a fixed mesh size pair $(H, h) = (\textstyle{\frac{1}{16}}, \textstyle{\frac{1}{256}})$ and consider the polynomial space pairs with $(\boldsymbol{P},\boldsymbol{p})=(1,2), (2,4), (3,6).$ To see the convergence order in physical space, we fixed polynomial space pair $\textstyle{(\boldsymbol{P},\boldsymbol{p})=(4,8)}$ and choose mesh size pairs $(H,h)$=$(\textstyle{\frac{1}{2}}, \textstyle{\frac{1}{4}})$, $(\textstyle{\frac{1}{4}}, \textstyle{\frac{1}{16}})$, $(\textstyle{\frac{1}{8}}, \textstyle{\frac{1}{64}})$, and $(\textstyle{\frac{1}{16}}, \textstyle{\frac{1}{256}})$. To compute the convergence order in physical space, we use the error between the two adjacent mesh size pairs. It can be seen from the left figure of Fig. {\ref{conorderex2}} that the accuracy is of the optimal order (i.e., $\mathcal{O}(\bold{N}^{-1})$) in $\mathcal L^2_{\rho}(\Gamma)\otimes \mathcal L^2(D)$ norm, as well as in $\mathcal L^2_{\rho}(\Gamma)\otimes \mathcal H_0^1(D)$ norm, (i.e. $\mathcal{O}(\bold{N}^{-0.5})$). 
The right  figure of Fig. {\ref{conorderex2}} shows the exponential decay with respect to the  polynomial degree $p$. 
\begin{figure}
\centering
\includegraphics[width=7.5cm]{./ex2-herror}
\includegraphics[width=7.5cm]{./ex2-perror}
\caption{Example 2: the convergence rate in physical space (left) and random space (right)}
\label{conorderex2}
\end{figure}
\begin{table}[H]
\centering
{\begin{tabular}{||c|c|c||}
\hline
& $\mathcal L^2_{\rho}(\Gamma)\otimes \mathcal L^2(D)-{\rm norm}$ & $\mathcal L^2_{\rho}(\Gamma)\otimes \mathcal H^{1}_0(D)-{\rm norm}$\\
\hline
$ u^*-u_{H,\boldsymbol{P}} $  & $1.6737E-4$ & $0.0014$ \\[4pt]
\hline
$ u-u_{\boldsymbol{P}}^{h} $   & $5.7462E-5$ & $2.6445E-4$ \\[4pt]
\hline
$ u^*-u^{h,\boldsymbol{p}} $   & $4.8794E-7$ & $7.1205E-6$ \\[4pt]
\hline
$ u-u_{\boldsymbol{p}}^{h} $   & $4.8782E-7$ & $7.1209E-6$ \\[4pt]
\hline
$ u^*-u_{h,\boldsymbol{p}} $  & $4.9333E-7$ & $7.1234E-6$ \\[4pt]
\hline
\end{tabular}}
\vspace{0.3cm}
\caption{Example 2: approximation errors with $(H,h)=(1/16,{1}/{256}),\ (\boldsymbol{P},\boldsymbol{p}) =(4,8)$}
\label{H2hex2}
\end{table}

Same conclusions can be drawn from Table \ref{H2hex2} and Table \ref{H4hex2} as those for Example 1. 
\begin{table}[H]
\centering
{\begin{tabular}{||c|c|c||}
\hline
& $\mathcal L^2_{\rho}(\Gamma)\otimes \mathcal L^2(D)-{\rm norm}$ & $\mathcal L^2_{\rho}(\Gamma)\otimes \mathcal H^{1}_0(D)-{\rm norm}$\\
\hline
$ u^*-u_{H,\boldsymbol{P}} $  & $0.0019$ & $0.0103$ \\[4pt]
\hline
$ u-u_{\boldsymbol{P}}^{h} $   & $5.8569E-5$ & $2.6728E-4$ \\[4pt]
\hline
$ u^*-u^{h,\boldsymbol{p}} $   & $3.5925E-6$ & $1.1866E-5$ \\[4pt]
\hline
$ u-u_{\boldsymbol{p}}^{h} $   & $3.6285E-6$ & $1.1950E-5$ \\[4pt]
\hline
$ u^*-u_{h,\boldsymbol{p}} $  & $4.9333E-7$ & $7.1234E-6$ \\[4pt]
\hline
\end{tabular}}
\vspace{0.3cm}
\caption{Example 2: approximation errors with $(H,h)=(1/4,{1}/{256}),\ (\boldsymbol{P},\boldsymbol{p})=(4,8)$}
\label{H4hex2}
\end{table}


\section{Conclusion and future work}
We study the stochastic collocation method for solving semilinear elliptic equation with random coefficients. A novel two-level discretization technique is proposed to improve the efficiency of the standard stochastic collocation method. We analyze the convergence of this two-level discretization scheme and prove that when choosing the discretization parameters $h, H, \boldsymbol{p}, \boldsymbol{P}$ appropriately, the two-level solution has the same order of accuracy as the fine level stochastic collocation solution. We also verify the theoretical results by several numerical examples. The main advantage of the two-level approach is that it reduces the computational complexity significantly. 

In this paper we focus on solving stochastic semilinear elliptic problems, however, the two-level method is applicable to general quasilinear partial differential equations with random coefficients \cite{Barajas-Solano:2016aa}. For problems with high stochastic dimensions, we may use sparse grid stochastic collocation method as opposed to the full tensor-product collocation used in this work. It may also be possible to generalize the idea of the two-level collocation method to utilize multiple levels of stochastic spaces when solving stochastic partial differential equations. These will be left as future work.

\subsection*{Acknowledgements} 
L. Chen is supported by the Fundamental Research Funds for the Central Universities of China (2682016CX108) and National Natural Science Foundation of China under Grant No. 11501473. G. Lin and B. Zheng would like to acknowledge the support by the Applied Mathematics Program within the Department of Energy Office of Advanced Scientific Computing Research as part of the Modeling and Simulation of High Dimensional Stochastic Multiscale PDE Systems project and the Collaboratory on Mathematics for Mesoscopic Modeling of Materials project. N. Voulgarakis was supported by the National Science Foundation under Grant No. DMS 1418962. Computations were performed using the computational resources of Pacific Northwest National Laboratory (PNNL) Institutional Computing cluster systems and the National Energy Research Scientific Computing Center at Lawrence Berkeley National Laboratory. The PNNL is operated by Battelle for the US Department of Energy under Contract DE-AC05-76RL01830.


\section*{References}
\bibliographystyle{spmpsci}      
\bibliography{reflibrary4SPDE}

\end{document}